\theoremstyle{plain}
\newtheorem{theorem}{Theorem}[section]
\newtheorem{c-theorem}{Construction theorem}[section]
\newtheorem{lemma}[theorem]{Lemma}
\newtheorem{proposition}[theorem]{Proposition}
\newtheorem{corollary}[theorem]{Corollary}
\theoremstyle{definition}
\newtheorem{example}[theorem]{Example}
\newtheorem{question}[theorem]{Question}
\theoremstyle{remark}
\newtheorem{remark}[theorem]{Remark}
\newlength{\struh}
\newlength{\textminustop}
\newcommand{\mf}{\mathfrak}
\newcommand*{\child}[1]{\mathsf{Chi}(#1)}
\newcommand*{\childn}[2]{{\mathsf{Chi}}^{\langle#1\rangle}(#2)}
\newcommand*{\parentn}[2]{{\mathsf{par}}^{\langle#1\rangle}(#2)}
\newcommand*{\gammab}{\boldsymbol\gamma}
\newcommand*{\Ge}{\geqslant}
\newcommand*{\lambdab}{\boldsymbol\lambda}
\newcommand*{\Le}{\leqslant}
\newcommand*{\parent}[1]{\mathsf{par}(#1)}
\newcommand{\ncom}{\newcommand}
\ncom{\bq}{\begin{equation}}
\ncom{\eq}{\end{equation}}
\ncom{\beqn}{\begin{eqnarray*}}
\ncom{\eeqn}{\end{eqnarray*}}
\ncom{\beq}{\begin{eqnarray}}
\ncom{\eeq}{\end{eqnarray}}
\ncom{\nno}{\nonumber}
\ncom{\rar}{\rightarrow}
\ncom{\Rar}{\Rightarrow}
\ncom{\noin}{\noindent}
\ncom{\bc}{\begin{centre}}
\ncom{\ec}{\end{centre}}
\ncom{\sz}{\scriptsize}
\ncom{\rf}{\ref}
\ncom{\sgm}{\sigma}
\ncom{\Sgm}{\Sigma}
\ncom{\dt}{\delta}
\ncom{\Dt}{Delta}
\ncom{\lmd}{\lambda}
\ncom{\Lmd}{\Lambda}
\ncom{\eps}{\epsilon}
\ncom{\pcc}{\stackrel{P}{>}}
\ncom{\dist}{{\rm\,dist}}
\ncom{\im}{{\rm Im\,}}
\ncom{\sgn}{{\rm sgn\,}}
\ncom{\ba}{\begin{array}}
\ncom{\ea}{\end{array}}
\ncom{\eop}{\hfill{{\rule{2.5mm}{2.5mm}}}}
\ncom{\eof}{\hfill{{\rule{1.5mm}{1.5mm}}}}
\ncom{\hone}{\mbox{\hspace{1em}}}
\ncom{\htwo}{\mbox{\hspace{2em}}}
\ncom{\hthree}{\mbox{\hspace{3em}}}
\ncom{\hfour}{\mbox{\hspace{4em}}}
\ncom{\hsev}{\mbox{\hspace{7em}}}
\ncom{\vone}{\vskip 2ex}
\ncom{\vtwo}{\vskip 4ex}
\ncom{\vonee}{\vskip 1.5ex}
\ncom{\vthree}{\vskip 6ex}
\ncom{\vfour}{\vspace*{8ex}}
\ncom{\norm}{\|\;\;\|}
\ncom{\integ}[4]{\int_{#1}^{#2}\,{#3}\,d{#4}}
\ncom{\inp}[2]{\langle{#1},\,{#2} \rangle}
\ncom{\Inp}[2]{\Langle{#1},\,{#2} \Langle}
\ncom{\vspan}[1]{{{\rm\,span}\#1 \}}}
\ncom{\dm}[1]{\displaystyle {#1}}
\begin{document}

\title[Wold-type decomposition for left-invertible weighted shifts]{Wold-type decomposition for left-invertible weighted shifts on a rootless directed tree}

\author[S. Chavan]{Sameer Chavan}
\author[S. Trivedi]{Shailesh Trivedi}

\address{Department of Mathematics and Statistics\\
Indian Institute of Technology Kanpur, 208016, India}

\email{chavan@iitk.ac.in}
  
\address{Department of Mathematics, Birla Institute of Technology and Science, Pilani 333031, India}

 \email{shailesh.trivedi@pilani.bits-pilani.ac.in}

\dedicatory{Dedicated to the memory of Pradeep Kumar}

\thanks{The work of the second author is supported partially through the SERB-SRG  (SRG/2023/000641-G) and OPERA (FR/SCM/03-Nov-2022/MATH)}

\keywords{wandering subspace property, Wold-type decomposition, weighted shift, rootless directed tree, $m$-isometry}

\subjclass[2020]{Primary 47B37; Secondary 05C20}

\begin{abstract} 
Let $S_{\lambdab}$ be a bounded left-invertible weighted shift on a rootless directed tree $\mathcal T=(V, \mathcal E).$ We address the question of when $S_{\lambdab}$ has Wold-type decomposition. We relate this problem to the convergence of the series $\displaystyle {\tiny \sum_{n = 1}^{\infty} \sum_{u
\in G_{v, n}\backslash G_{v, n-1}} \Big(\frac{\lambdab^{(n)}(u)}{\lambdab^{(n)}(v)}\Big)^2},$ $v \in V,$ involving the  moments $\lambdab^{(n)}$ of $S^*_{\lambdab}$, where $G_{v, n}=\childn{n}{\parentn{n}{v}}.$ The main result of this paper characterizes all bounded left-invertible weighted shifts $S_{\lambdab}$ on $\mathcal T,$ which have Wold-type decomposition. 
\end{abstract}

\maketitle


\section{Introduction and preliminaries}
Let $\mathbb N$, $\mathbb Z,$ and $\mathbb C$ stand for
the sets of non-negative integers, integers, and complex
numbers, respectively. 
For a subset $A$ of a non-empty set $X$, $\mbox{card}(A)$ denotes the cardinality of $A$. For a subset $E$ of a vector space $X,$ let $\mbox{span}\,E$ denote the linear span of $E.$
For a complex Hilbert space $\mathcal H,$ let $\mathcal B(\mathcal H)$ denote the unital $C^*$-algebra of bounded linear operators on $\mathcal H.$ Let $\lambda \in \mathbb C$ and $T \in \mathcal B(\mathcal H).$ A sequence $\{u_n\}_{n \Ge 1}$ in $\mathcal H$ is called a {\it Weyl sequence} for $(\lambda, T)$ if $\{u_n\}_{n \Ge 1}$ is a weakly null sequence of unit vectors such that $(T-\lambda)u_n \rar 0$ as $n \rar \infty.$  
For a subspace $\mathcal M$ of $\mathcal H,$ let $[\mathcal M]_T=\vee \{T^nh : n \Ge 0, ~h \in \mathcal M \},$ where $\vee$ stands for the closed linear span.
The {\it hyper-range} of $T$ is given by $T^{\infty}(\mathcal H) = \bigcap_{n=0}^{\infty}T^n\mathcal H.$
We say that $T$ is {\it analytic} if $T^{\infty}(\mathcal H) = \{0\}$.
An operator $T$ has the {\it wandering subspace property} if $\mathcal H = [\ker T^*]_T,$ where $\ker S$ stands for the kernel of $S \in \mathcal B(\mathcal H).$  
Since $\ker T^*$ is orthogonal to $T^n(\ker T^*)$ for any integer $n \Ge 1,$ 
following \cite{H1961}, $\ker T^*$ is called the {\it wandering subspace} of $T.$
Assume that $T \in \mathcal B(\mathcal H)$ is a left-invertible operator. 
Following \cite{S2001}, we say that $T$ has {\it Wold-type decomposition} if $T^{\infty}(\mathcal H)$ reduces $T$ to a unitary and $\mathcal H = T^{\infty}(\mathcal H) \oplus [\ker T^*]_T.$  In this case, the restriction of $T$ to $[\ker T^*]_T$ is referred to as the {\it analytic part} of $T.$ The {\it Cauchy dual} $T'$ of $T$ is given by  $T':=T(T^*T)^{-1}$ (this notion appeared implicitly in \cite[Proof of Theorem~1]{R1988}, and formally introduced in \cite[Eq.~(1.6)]{S2001}).  

Given a positive integer $m$ and $T \in \mathcal B(\mathcal H)$, set
   \beqn
\Delta_{m, T} := \sum_{k=0}^m (-1)^k \binom{m}{k}{T^*}^kT^k.
   \eeqn
An operator $T$ is said to be an {\it isometry} (resp. {\it norm-increasing}) if $\Delta_{1, T} = 0$ (resp. $\Delta_{1, T} \Le 0$). 
For $m \Ge 2$, an operator
$T \in \mathcal B(\mathcal H)$ is said to be an {\it $m$-isometry} (resp. {\it $m$-concave}, resp. {\it $m$-expansion}) if $\Delta_{m, T} = 0$ (resp. $(-1)^m \Delta_{m, T} \leqslant 0$, resp. $\Delta_{m, T} \leqslant 0$). 
The reader is referred to \cite{AS1995, R1988, S2001} for the basic properties of $m$-isometries and related classes.

In \cite[p.~185]{S2001}, Shimorin asked whether it is true that the $m$-concave and
norm-increasing operators have Wold-type decomposition for $m \Ge 3.$
Here we explain the thought process that could help in answering this question. 
In this regard, note that the construction in \cite[Example~3.1]{ACT2020} was capitalized on the idea of the weighted
shift on a one-circuit directed graph (see \cite[Section 3]{BJJS2017}). 
The one-dimensional space on which the weighted shift in question fails to be norm-increasing corresponds to the indicator function of an element of the circuit. Thus, to get a counterexample to Shimorin's question from weighted shifts on directed graphs, it is necessary to confine ourselves to directed graphs without circuits. 
This thinking can indeed be strongly based on \cite[Theorem~4.3]{ACT2020}, which provide a necessary condition for the failure of the wandering subspace property for analytic norm-increasing $m$-concave operators. Indeed, we have a more general fact:

\begin{theorem} 
\label{dichotomy-new-coro}
Let $T \in \mathcal B(\mathcal H)$ be an analytic, norm-increasing $m$-concave operator and let $\mathcal M = \mathcal H \ominus [\ker T^*]_T.$   
If $T$ does not have the wandering subspace property, then for any $\lambda \in \sigma_{ap}(T'|_{\mathcal M}),$ there exists a Weyl sequence in $\mathcal M$ for both $(\lambda, T)$ and $(\lambda, T').$ In this case, $\mathcal M$ is infinite-dimensional.
\end{theorem}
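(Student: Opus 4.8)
The plan is to reduce the whole statement to a spectral analysis of the Cauchy dual $T'$ on $\mathcal{M}$, resting on three facts: the identification $\mathcal{M}=(T')^{\infty}(\mathcal{H})$, the polynomial growth of the powers of an $m$-concave norm-increasing operator, and the elementary identities $T^{*}T'=I=(T')^{*}T$ together with $\|T'\|\Le 1$. First I would record the structure. Since $T$ is norm-increasing, $T$ is bounded below, $T^{*}T\Ge I$, and $\|T'x\|^{2}=\langle(T^{*}T)^{-1}x,x\rangle\Le\|x\|^{2}$, so $T'$ is a contraction and is itself bounded below. A short induction, based on $\ker(T')^{*}=\ker T^{*}$ and $(T')^{*}T=I$, shows $\overline{\bigcup_{n\ge 1}\ker((T')^{*})^{n}}=[\ker T^{*}]_{T}$, hence $\mathcal{M}=\bigcap_{n\ge 1}(T')^{n}(\mathcal{H})=(T')^{\infty}(\mathcal{H})$; in particular $\mathcal{M}$ is invariant for both $T'$ and $T^{*}$, and being the hyper-range of the bounded-below operator $T'$ it is mapped onto itself by $T'$, so $A:=T'|_{\mathcal{M}}$ is invertible on $\mathcal{M}$. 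From $T^{*}T'=I$ and $T^{*}\mathcal{M}\subseteq\mathcal{M}$ one reads off $A^{-1}=T^{*}|_{\mathcal{M}}$. Thus $A$ is a contraction on $\mathcal{M}$ with bounded inverse $T^{*}|_{\mathcal{M}}$.

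Next I would pin down $\sigma_{ap}(A)$. For an $m$-concave norm-increasing $T$ the sequence $n\mapsto\|T^{n}x\|^{2}$ is non-decreasing with non-positive $m$th forward differences, which forces $\|T^{n}x\|^{2}=O(n^{m-1})$ for each $x$; by the uniform boundedness principle $\|T^{n}\|=O(n^{(m-1)/2})$, so $T$ and $T^{*}$ have spectral radius $1$ and $\sigma_{ap}(T^{*})\subseteq\overline{\D}$ (this polynomial bound is a known feature of $m$-isometric and $m$-concave operators and may simply be quoted). Now take $\lambda\in\sigma_{ap}(A)$: then $\lambda\ne 0$, $|\lambda|\Le 1$ because $A$ is a contraction, and $|\lambda|\Ge 1$ because $1/\lambda\in\sigma_{ap}(A^{-1})=\sigma_{ap}(T^{*}|_{\mathcal{M}})\subseteq\sigma_{ap}(T^{*})\subseteq\overline{\D}$; hence $|\lambda|=1$. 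Moreover $A$ has no eigenvalue: if $Av=\lambda v$ with $v\ne 0$, then $\|T'v\|=|\lambda|\,\|v\|=\|v\|$ gives $\langle(T^{*}T)^{-1}v,v\rangle=\|v\|^{2}$, and since $(T^{*}T)^{-1}\Le I$ this forces $T^{*}Tv=v$; but then $T'v=T(T^{*}T)^{-1}v=Tv$, so $Tv=\lambda v$ and $v\in T^{\infty}(\mathcal{H})=\{0\}$ by analyticity, a contradiction. Consequently $\mathcal{M}$ cannot be nonzero and finite-dimensional, and, since $\ker(A-\lambda)=\{0\}$, any sequence of unit vectors $v_{n}\in\mathcal{M}$ with $(A-\lambda)v_{n}\to 0$ is automatically weakly null.

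Now I would produce the Weyl sequence. Fix $\lambda\in\sigma_{ap}(T'|_{\mathcal{M}})$, so $|\lambda|=1$, and pick unit vectors $v_{n}\in\mathcal{M}$ with $(A-\lambda)v_{n}\to 0$; these are weakly null and satisfy $(T'-\lambda)v_{n}=(A-\lambda)v_{n}\to 0$. Applying $A^{-1}=T^{*}|_{\mathcal{M}}$ gives $T^{*}v_{n}-\bar\lambda v_{n}\to 0$, whence $\langle Tv_{n},v_{n}\rangle=\langle v_{n},T^{*}v_{n}\rangle\to\lambda$ and $2\,\mathrm{Re}(\bar\lambda\langle Tv_{n},v_{n}\rangle)\to 2$. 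Writing $Q:=T^{*}T-I\Ge 0$ (bounded, with $\|I+Q\|=\|T\|^{2}$), we have $\langle(I+Q)^{-1}v_{n},v_{n}\rangle=\|T'v_{n}\|^{2}\to|\lambda|^{2}=\|v_{n}\|^{2}$, so $\langle(I-(I+Q)^{-1})v_{n},v_{n}\rangle=\|Q^{1/2}(I+Q)^{-1/2}v_{n}\|^{2}\to 0$, and therefore $\langle Qv_{n},v_{n}\rangle=\|(I+Q)^{1/2}Q^{1/2}(I+Q)^{-1/2}v_{n}\|^{2}\Le\|T\|^{2}\,\|Q^{1/2}(I+Q)^{-1/2}v_{n}\|^{2}\to 0$. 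Hence $\|Tv_{n}\|^{2}=1+\langle Qv_{n},v_{n}\rangle\to 1$, and $\|(T-\lambda)v_{n}\|^{2}=\|Tv_{n}\|^{2}-2\,\mathrm{Re}(\bar\lambda\langle Tv_{n},v_{n}\rangle)+|\lambda|^{2}\to 1-2+1=0$. Thus the single sequence $(v_{n})$ is a Weyl sequence in $\mathcal{M}$ for both $(\lambda,T)$ and $(\lambda,T')$, and the infinite-dimensionality of $\mathcal{M}$ follows (a Weyl sequence cannot live in a finite-dimensional space).

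The crux — and essentially the only real obstacle — is forcing $|\lambda|=1$. Without it the spectral estimate above only yields $\liminf\|Tv_{n}\|^{2}\Ge 2-|\lambda|^{2}$, the asymptotic collapse $\langle Qv_{n},v_{n}\rangle\to 0$ breaks down, and the $(T-\lambda)$-Weyl condition can genuinely fail; so the whole argument rests on the polynomial growth bound supplied by $m$-concavity together with norm-increasingness (and on having the identification $\mathcal{M}=(T')^{\infty}(\mathcal{H})$, with its consequences $A^{-1}=T^{*}|_{\mathcal{M}}$ and the invertibility of $A$, available from the start). The remaining points — the induction for $\mathcal{M}=(T')^{\infty}(\mathcal{H})$, the boundedness bookkeeping around $Q$, and weak nullity via $\ker(A-\lambda)=\{0\}$ — are routine.
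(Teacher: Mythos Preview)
Your proof is correct and follows the same overall strategy as the paper: establish $|\lambda|=1$ from the spectral radius bound supplied by $m$-concavity plus norm-increasingness, then show that the approximating sequence for $T'$ is also one for $T$, and extract weak nullity from analyticity. The one substantive difference is in how you pass from $(T'-\lambda)v_n\to 0$ with $|\lambda|=1$ to $(T-\lambda)v_n\to 0$. The paper isolates a clean abstract lemma about contractions (if $S$ is a contraction and $Sh_n-h_n\to 0$ then $S^*h_n-h_n\to 0$), applies it to $\bar\lambda T'$ to get $T'^*h_n-\bar\lambda h_n\to 0$, hence $(T'^*T')^{-1}h_n-h_n\to 0$, and finishes via $T=T'(T'^*T')^{-1}$. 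You instead run a direct positive-operator estimate with $Q=T^*T-I$, squeezing $\langle Qv_n,v_n\rangle\to 0$ out of $\|T'v_n\|\to 1$ by functional calculus, and combine this with $\langle Tv_n,v_n\rangle\to\lambda$. Both arguments encode the same phenomenon (the sequence is asymptotically isometric for $T$), and both are short; the paper's formulation has the advantage of yielding a more general statement (its Theorem in the appendix applies to any nonzero $T'$-invariant subspace, not just $\mathcal H\ominus[\ker T^*]_T$), whereas your extra structural input $\mathcal M=(T')^{\infty}(\mathcal H)$ and $A^{-1}=T^*|_{\mathcal M}$, while correct, is not actually needed for the core estimate.
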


Theorem~\ref{dichotomy-new-coro} is not directly related to the objective of this paper, and hence we include its proof in the appendix. In view of Theorem~\ref{dichotomy-new-coro}, a counterexample to Shimorin's question can not be built on any one-circuit directed graph (see \cite[Corollary~4.4]{ACT2020}). This, combined with the fact that any bounded weighted shift on a rooted directed tree admits Wold-type decomposition (see \cite[Proposition~1.3.4]{CPT2017}),
propels us to explore rootless directed graphs without a circuit or more specifically, rootless directed trees. The natural question arises as to which choice of the rootless directed tree would lead to the desired counterexample. 
It turns out that a rootless directed tree of finite branching index (see \cite[Definition~3]{CT2016}) does not support analytic weighted shifts. Indeed, by \cite[Lemma~6.1]{CT2016}, the indicator function of the so-called generalized root belongs to the hyper-range of the weighted shift in question. This motivates us to examine rootless directed trees of infinite branching index. A basic example of such directed trees, with which we are primarily concerned, is that of the rootless 
quasi-Brownian directed tree of finite valency (see \cite[Example 4.4]{J2003}). Although we do not have definite answer to Shimorin's question, the main result of this paper (see Theorem~\ref{Wold-type-thm}) answers the following closely related question$:$
\begin{question}
Which left-invertible weighted shifts on a rootless directed tree have Wold-type decomposition ?
\end{question}

Before we state the main result of this paper, we reproduce some necessary notions from the graph theory. The reader is referred to \cite{JJS2012} for definitions and basic properties pertaining to directed trees. The child function and parent partial function are denoted by $\child{\cdot}$ and $\parent{\cdot},$ respectively. If the directed tree in question is rootless, then $\parent{\cdot}$ is a function.

Throughout this paper, $\mathcal T =(V, \mathcal E)$ denotes a countably infinite, leafless, rootless directed tree. A {\it path} $\mathcal P$ in $\mathcal T$ is a subtree $(V_{\mathcal P}, \mathcal E_{\mathcal P})$ of $\mathcal T$ such that for every $v \in V_{\mathcal P},$ $\mbox{card}(\mathsf{Chi}_{\mathcal P}(v))=1.$ 
By a {\it bilateral path in $\mathcal T$}, we understand the path of the form $\{v_m\}_{m \in \mathbb Z},$ where $v_0 \in V$ and $v_m,$ $m \neq 0,$ is given by\footnote{The definition of $v_m,$ $m \Ge 1,$ uses the axiom of choice together with the assumption that $\mathcal T$ is leafless.} 
\beq \label{v-m-def}
v_m := \begin{cases} 
\parentn{-m}{v_0} & \mbox{if~} m \Le -1, \\
\mbox{any but fixed element of} ~\child{v_{m-1}} & \mbox{if~}m \Ge 1.
\end{cases}
\eeq
We say that  $u,v \in V$ {\it belong to the same generation} of $\mathcal T$ if there exists a non-negative integer $n$ such that $\parentn{n}{u} = \parentn{n}{v}.$ This defines an equivalence relation on $V$. The {\it generation} $\mathscr G_v$ of $v \in V$ is defined as 
the equivalence class containing $v.$ 
For $v \in V$ and an integer $n \in \mathbb N$, define 
\beq \label{A-v-n-def} A(v,n) := \begin{cases}
\{v\} & \mbox{ if } n = 0,\\
\childn{n}{\parentn{n}{v}} \setminus \childn{n-1}{\parentn{n-1}{v}} & \mbox{ if } n \geqslant 1.\end{cases}
\eeq 
For a net $\{\lambda_v\}_{v\in V},$ define $\lambdab^{(n)}$ by  
\beq \label{lambda-k}
\lambdab^{(0)} := 1, \quad \lambdab^{(n)} := \lambdab (\lambdab \circ \mathsf{par}) \cdots (\lambdab \circ \mathsf{par}^{\langle n-1\rangle}), \quad n \Ge 1,
\eeq
where $\lambdab : V \rar \mathbb C$ is given by $\lambdab(v)=\lambda_v,$ $v \in V.$ 
For every $v \in V,$ define $\alpha_{\lambdab}(v) \in (0, \infty]$ by
\beq \label{alpha-m-lambdab}
\alpha_{\lambdab}(v) := \displaystyle \sum_{n = 0}^{\infty} \sum_{u
\in A(v,n)} \left(\frac{\lambdab^{(n)}(u)}{\lambdab^{(n)}(v)}\right)^2.
\eeq


In what follows, $\ell^2(V)$ stands for the Hilbert
space of square summable complex functions on $V$
equipped with the standard inner product. If $e_u$
denotes the indicator function of $\{u\},$ then 
$\{e_u\}_{u\in V}$ is an
orthonormal basis of $\ell^2(V).$ Given a system
$\lambdab = \{\lambda_v\}_{v\in V}$ of complex numbers, 
we define the {\em weighted shift operator} $S_{\lambdab}$ on ${\mathcal T}$
with weights $\lambdab$ by
   \begin{align*}
   \begin{aligned}
{\mathscr D}(S_{\lambdab}) & := \{f \in \ell^2(V) \colon
\varLambda_{\mathcal T} f \in \ell^2(V)\},
   \\
S_{\lambdab} f & := \varLambda_{\mathcal T} f, \quad f \in {\mathscr
D}(S_{\lambda}),
   \end{aligned}
   \end{align*}
where $
(\varLambda_{\mathcal T} f) (v) :=
\lambda_v \cdot f\big(\parent v\big),$ $v \in V.$ This notion has been extensively studied in \cite{JJS2012}. 
We always assume that $\{\lambda_v\}_{v\in V}$ consists of positive numbers and $S_{\lambdab}$ belongs to $\mathcal B(\ell^2(V)).$
Recall from \cite{ACJS2019, BDPP2019, CPT2017} that the weighted shift $S_{\lambdab}$ on a rootless directed tree is {\it balanced} if $\|S_{\lambdab} e_v\| = \|S_{\lambdab} e_u\|$ for all $u$ and $v$ belonging to the same generation. 
If $S_{\lambdab}$ is a left-invertible weighted shift with weights 
$\{\lambda_v\}_{v\in V},$ then by the discussion following \cite[Lemma~1.1]{CT2016}, the Cauchy dual $S'_{\lambdab}$ of 
$S_{\lambdab}$ is a bounded weighted shift on $\mathcal T$ with weights 
$\lambdab'=\{\lambda'_v\}_{v\in V},$ where 
\beq \label{weights}
\lambda'_v:=\frac{\lambda_v}{\|S_\lambda 
e_{\mathsf{par}(v)}\|^2}, \quad v \in V.
\eeq 
The following is the main result of this paper.
\begin{theorem}\label{Wold-type-thm}
Let $S_{\lambdab}$ be a left-invertible weighted shift on a rootless directed tree $\mathcal T=(V, \mathcal E).$ Let $\alpha_{\lambdab}$ and $\lambdab'$ be as given in \eqref{alpha-m-lambdab} and \eqref{weights}, respectively.
Then $S_{\lambdab}$ has Wold-type decomposition if and only if exactly one of the following holds$:$
\begin{itemize}
\item[(i)] $\alpha_{\lambdab}(v)=\alpha_{\lambdab'}(v)=\infty$ for some $($or equivalently, for all$)$ $v \in V,$ 
\item[(ii)] $\alpha_{\lambdab}(v) < \infty$ for some $($or equivalently, for all$)$ $v \in V,$ $\lambda_{v}=$ ${\tiny \sqrt{\frac{\alpha_{\lambdab}(\parent{v})}{\alpha_{\lambdab}(v)}}}$ for all $v$ in a bilateral path $($or equivalently, for all $v \in V)$,  and $S_{\lambdab}$ is a balanced weighted shift on $\mathcal T.$
\end{itemize}
\end{theorem}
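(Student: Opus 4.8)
The plan is to reduce the problem, via the Cauchy dual, to a precise description of the hyper-range $S_{\lambdab}^{\infty}(\ell^{2}(V))$ and of the subspace $[\ker S_{\lambdab}^{*}]_{S_{\lambdab}}$, and then to match the alternatives (i), (ii) against the general criterion for Wold-type decomposition. First I would record some facts valid for any left-invertible $T\in\mathcal B(\mathcal H)$: $(T')'=T$ and $\ker (T')^{*}=\ker T^{*}$; the identity $(T^{n}\mathcal H)^{\perp}=\ker T^{*n}=\ker T^{*}+T'\ker T^{*}+\cdots+(T')^{n-1}\ker T^{*}$, whence $\big(T^{\infty}(\mathcal H)\big)^{\perp}=[\ker T^{*}]_{T'}$; consequently $T$ is analytic iff $T'$ has the wandering subspace property, $T$ has the wandering subspace property iff $T'$ is analytic, and $T$ has Wold-type decomposition iff $[\ker T^{*}]_{T}=\big(T^{\infty}(\mathcal H)\big)^{\perp}$ and $T|_{T^{\infty}(\mathcal H)}$ is unitary --- here $T^{\infty}(\mathcal H)$ is automatically $T$-invariant and $T|_{T^{\infty}(\mathcal H)}$ is automatically a bijection of $T^{\infty}(\mathcal H)$, so only its isometricity is in question. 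For $T=S_{\lambdab}$ the Cauchy dual is the weighted shift $S_{\lambdab'}$ with weights \eqref{weights}, so $S_{\lambdab'}'=S_{\lambdab}$, with common wandering subspace $\ker S_{\lambdab}^{*}=\ker S_{\lambdab'}^{*}$.

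Next comes the structural core. Fix a bilateral path $\{v_{m}\}_{m\in\mathbb Z}$; since $\mathcal T$ is rootless and leafless, the generations of $\mathcal T$ are precisely the classes $\mathscr G_{v_{m}}$, $m\in\mathbb Z$, and $S_{\lambdab}$ maps $\ell^{2}(\mathscr G_{v_{m}})$ into $\ell^{2}(\mathscr G_{v_{m+1}})$. Along this grading, a vector lies in the hyper-range iff each of its generational components does; using $S_{\lambdab}^{*n}e_{v}=\lambdab^{(n)}(v)\,e_{\parentn{n}{v}}$ and $S_{\lambdab}^{n}e_{w}=\sum_{u\in\childn{n}{w}}\lambdab^{(n)}(u)\,e_{u}$, one checks that the component in $\ell^{2}(\mathscr G_{v_{m}})$ is necessarily a scalar multiple of $f_{v_{m}}:=\sum_{n\Ge 0}\sum_{u\in A(v_{m},n)}\big(\lambdab^{(n)}(u)/\lambdab^{(n)}(v_{m})\big)e_{u}$, and conversely that this $f_{v_{m}}$ lies in the hyper-range precisely when $\|f_{v_{m}}\|^{2}=\alpha_{\lambdab}(v_{m})<\infty$ (with preimages $\lambdab^{(n)}(v_{m})^{-1}e_{v_{m-n}}$, which already belong to $\ell^{2}(V)$). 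Hence $S_{\lambdab}^{\infty}(\ell^{2}(V))=\overline{\operatorname{span}}\,\{f_{v}:v\in V\}$, it is nonzero iff $\alpha_{\lambdab}(v)<\infty$ for some, equivalently for all, $v\in V$, and $S_{\lambdab}f_{v_{m}}=\lambda_{v_{m+1}}f_{v_{m+1}}$; the same holds for $S_{\lambdab'}$ with $\alpha_{\lambdab'}$.

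The analytic case is then short. If $\alpha_{\lambdab}(v)=\alpha_{\lambdab'}(v)=\infty$, then $S_{\lambdab}$ is analytic and $S_{\lambdab'}$ is analytic, so $S_{\lambdab}$ has the wandering subspace property; since its hyper-range is $\{0\}$, this gives Wold-type decomposition. Conversely, if $S_{\lambdab}$ has Wold-type decomposition and is analytic, then $[\ker S_{\lambdab}^{*}]_{S_{\lambdab}}=\ell^{2}(V)$, so $S_{\lambdab'}$ is analytic and $\alpha_{\lambdab'}(v)=\infty$; this is (i), and (i) forces $\alpha_{\lambdab}=\infty$. Assume henceforth $\alpha_{\lambdab}(v)<\infty$, so $S_{\lambdab}^{\infty}(\ell^{2}(V))\ne\{0\}$; by the first paragraph, $S_{\lambdab}$ has Wold-type decomposition iff (a) $S_{\lambdab}|_{S_{\lambdab}^{\infty}(\ell^{2}(V))}$ is unitary and (b) $[\ker S_{\lambdab}^{*}]_{S_{\lambdab}}=\big(S_{\lambdab}^{\infty}(\ell^{2}(V))\big)^{\perp}$. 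As $\{f_{v_{m}}\}_{m}$ is an orthogonal basis of the hyper-range and $S_{\lambdab}f_{v_{m}}=\lambda_{v_{m+1}}f_{v_{m+1}}$, (a) is equivalent to $\lambda_{v_{m+1}}^{2}\,\alpha_{\lambdab}(v_{m+1})=\alpha_{\lambdab}(v_{m})$ for all $m$, i.e. to $\lambda_{v}=\sqrt{\alpha_{\lambdab}(\parent v)/\alpha_{\lambdab}(v)}$ along the chosen path; since each $f_{v}$ is a scalar multiple of $f_{v_{m}}$ (for $v$ in the $m$-th generation), this is equivalent to the same identity along every bilateral path, equivalently at every $v\in V$.

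The heart of the argument is condition (b). I would combine $[\ker S_{\lambdab}^{*}]_{S_{\lambdab}}^{\perp}=\{h:S_{\lambdab}^{*n}h\in\operatorname{ran}S_{\lambdab}\ \text{for all}\ n\Ge 0\}$ with $\operatorname{ran}S_{\lambdab}=\{h\in\ell^{2}(V):u\mapsto h(u)/\lambda_{u}\ \text{depends only on}\ \parent u\}$ and $S_{\lambdab}^{*n}S_{\lambdab}^{n}=\operatorname{diag}\big(\|S_{\lambdab}^{n}e_{u}\|^{2}\big)_{u\in V}$ (diagonal since $w\mapsto\parentn{n}{w}$ partitions $V$, so $S_{\lambdab}^{n}e_{u}$ and $S_{\lambdab}^{n}e_{u'}$ have disjoint supports for $u\ne u'$), and feed in the hyper-range preimages from the second paragraph: this yields that $[\ker S_{\lambdab}^{*}]_{S_{\lambdab}}\subseteq\big(S_{\lambdab}^{\infty}(\ell^{2}(V))\big)^{\perp}$ holds iff $\|S_{\lambdab}^{n}e_{u}\|^{2}$ is constant on every sibling set $\child{p}$ for every $n\Ge 1$; an elementary induction on the branching level (using $\|S_{\lambdab}^{m+1}e_{q}\|^{2}=\|S_{\lambdab}e_{q}\|^{2}\,\|S_{\lambdab}^{m}e_{q'}\|^{2}$ for $q'\in\child{q}$, valid under that constancy) then shows this family of identities to be equivalent to $S_{\lambdab}$ being balanced. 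Next, regrouping the defining series of $\alpha_{\lambdab}$ along $A(v,n)=\bigsqcup_{q\in A(\parent v,\,n-1)}\child{q}$ ($n\Ge 2$) gives, when $S_{\lambdab}$ is balanced, the recursion $\alpha_{\lambdab}(v)=\|S_{\lambdab}e_{\parent v}\|^{2}\,\alpha_{\lambdab}(\parent v)/\lambda_{v}^{2}$; together with the weight identity (a) this forces $\|S_{\lambdab}e_{v}\|=1$ for every $v$, i.e. (recall $S_{\lambdab}^{*}S_{\lambdab}=\operatorname{diag}(\|S_{\lambdab}e_{v}\|^{2})$) that $S_{\lambdab}$ is an isometry --- whose classical Wold decomposition is a Wold-type decomposition with unitary part $S_{\lambdab}^{\infty}(\ell^{2}(V))$. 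So the hypotheses of (ii) entail Wold-type decomposition; conversely, if $S_{\lambdab}$ has Wold-type decomposition and $\alpha_{\lambdab}(v)<\infty$, then (a) and (b) hold, (b) forces balancedness and (a) is the weight identity, so (ii) holds. With the analytic case, and since (i) and (ii) are mutually exclusive ($\alpha_{\lambdab}$ infinite versus finite) while failure of both precludes Wold-type decomposition, the theorem follows. I expect condition (b) to be the main obstacle: linking ``$[\ker S_{\lambdab}^{*}]_{S_{\lambdab}}$ exhausts the orthocomplement of the hyper-range'' to balancedness requires the combinatorics of the sets $A(v,n)$ together with careful control of the diagonal operators $S_{\lambdab}^{*n}S_{\lambdab}^{n}$, and this is precisely where the ``balanced'' hypothesis is indispensable.
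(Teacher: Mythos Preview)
Your overall architecture mirrors the paper's: describe the hyper-range via the vectors $f_{v_m}$ (the paper's $g_{m,\mathfrak v}$, Theorem~\ref{hyper-range-tree}), handle the analytic case via Shimorin's duality $[\ker T^*]_T^\perp=T'^{\infty}(\mathcal H)$ exactly as the paper does, and in the non-analytic case reduce Wold-type decomposition to ``restriction to hyper-range is isometric'' plus ``hyper-range reduces $S_{\lambdab}$'' (your (a) and (b)). Your derivation that (b) forces balancedness is essentially the content of the paper's Theorem~\ref{balanced-reducing} (first part), just routed through the equivalent condition ``$\|S_{\lambdab}^n e_u\|$ constant on sibling sets for all $n$'' rather than through a direct computation of $S_{\lambdab}^*g_{m,\mathfrak v}$. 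Where you genuinely diverge from the paper is the implication (ii)$\Rightarrow$Wold-type: your recursion $\alpha_{\lambdab}(v)\lambda_v^2=\|S_{\lambdab}e_{\parent v}\|^2\alpha_{\lambdab}(\parent v)$ combined with the weight identity shows $\|S_{\lambdab}e_w\|=1$ for every $w$, so $S_{\lambdab}$ is an \emph{isometry} and classical Wold applies. The paper instead proves a Wold-type decomposition for all balanced left-invertible shifts (Lemma~\ref{balanced-Wold} and the second half of Theorem~\ref{balanced-reducing}) without noticing that under (ii) the shift is already isometric; your route is shorter here and yields a sharper conclusion.

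There is one concrete error you should fix. Your parenthetical ``with preimages $\lambdab^{(n)}(v_m)^{-1}e_{v_{m-n}}$'' is wrong: $S_{\lambdab}^n\big(\lambdab^{(n)}(v_m)^{-1}e_{v_{m-n}}\big)$ is supported only on $\childn{n}{v_{m-n}}=\bigsqcup_{j\le n}A(v_m,j)$, so it equals the $n$-th partial sum of $f_{v_m}$, not $f_{v_m}$ itself. The actual $S_{\lambdab}^n$-preimage of $f_{v_m}$ is $\lambdab^{(n)}(v_m)^{-1}f_{v_{m-n}}$, and invoking this is circular (you would need $\alpha_{\lambdab}(v_{m-n})<\infty$, which is part of the dichotomy you are proving). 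The clean repair is the paper's Lemma~\ref{hyper-range}: characterize $S_{\lambdab}^n(\ell^2(V))$ as those $f$ with $f/\lambdab^{(n)}$ constant on each $\childn{n}{w}$, verify this directly for $f_{v_m}$, and use left-invertibility to bound the preimage in $\ell^2$. With that patch your proof goes through.
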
  
\begin{remark}
In either of the cases (i) or (ii), the analytic part of $S_{\lambdab}$ is unitarily equivalent to the operator $\mathscr M_z$ of multiplication by the coordinate function $z$ on a reproducing kernel Hilbert space of $\ker S^*_{\lambdab}$-valued holomorphic functions (see \cite[pp.~153-154]{S2001}). Moreover, if (ii) holds, then it can be seen that the analytic part of $S_{\lambdab}$ is unitarily equivalent to an operator-valued weighted shifts with invertible operator weights (cf. \cite[Proof of Theorem~4.5]{ACJS2019-2}; see Theorem~\ref{balanced-reducing}). 
\end{remark}

Our proof of Theorem~\ref{Wold-type-thm} relies on several results presented in Sections~\ref{S2}-\ref{S4}. Specifically, these include the following two basic facts$:$ 
\begin{itemize}
\item Description of the hyperange of a bounded left-invertible weighted shift on a rootless directed tree (see Theorem~\ref{hyper-range-tree}), 
\item Wold-type decomposition theorem for a bounded left-invertible balanced weighted shift on a rootless directed tree (see Theorem~\ref{balanced-reducing}). 
\end{itemize}
In Section~\ref{S5}, we capitalize on Theorem~\ref{Wold-type-thm} to construct a family of analytic norm-increasing $3$-expansions without Wold-type decomposition (see Example~\ref{Exam-ce}). We conclude the paper with an appendix that includes a general fact (see Theorem~\ref{dichotomy-new}) generalizing Theorem~\ref{dichotomy-new-coro}.

\section{Graph theoretic necessities} \label{S2}

This section collects some results needed in the proof of Theorem~\ref{Wold-type-thm}. 
We begin with several elementary graph-theoretic facts. 
\begin{lemma} \label{preparatory}
Let $\mathcal T =(V, \mathcal E)$ be a rootless directed tree and let $v \in V.$ For $n \in \mathbb N,$ let $A(v, n)$ be given by \eqref{A-v-n-def}.
Then the following statements are true:
\begin{itemize}
\item[(i)] $\mathscr G_v = \bigsqcup_{n\in \mathbb N} A(v,n)$, 
\item[(ii)] $V = \bigsqcup_{m \in \mathbb Z} \mathscr G_{v_m}$ for any bilateral path $\{v_m\}_{m \in \mathbb Z},$
\item[(iii)] $\child{A(\parent{v},n)} = A(v,n+1)$ for all integers $n \geqslant 1$  and $v \in V$,
\item[(iv)] $\parent{A(v,n)} = A(\parent{v},n-1)$ for all integers $n \geqslant 1$  and $v \in V$,
\item[(v)] if $w \in A(v, n)$ for some $n \in \mathbb N,$ then $A(w, j) \subseteq A(v, n)$ for all integers $j = 0, \ldots, n.$
\end{itemize}
\end{lemma}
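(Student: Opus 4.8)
The plan is to translate all five claims into statements about the parent and child functions and read them off one by one. Three facts do essentially all the work. First, for $x\in V$ and $n\in\mathbb N$ the set $\childn{n}{x}$ consists exactly of the $u\in V$ with $\parentn{n}{u}=x$; hence, writing $G_{v,n}:=\childn{n}{\parentn{n}{v}}$, we have $u\in G_{v,n}$ if and only if $\parentn{n}{u}=\parentn{n}{v}$, the $G_{v,n}$ increase with $n$ (since $\parentn{n}{v}\in\child{\parentn{n+1}{v}}$), $v\in G_{v,n}$ for all $n$, and by \eqref{A-v-n-def}, for $n\ge 1$ one has $u\in A(v,n)$ if and only if $\parentn{n}{u}=\parentn{n}{v}$ and $\parentn{n-1}{u}\ne\parentn{n-1}{v}$, i.e.\ $n$ is the least index with $\parentn{n}{u}=\parentn{n}{v}$. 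Second, $\parentn{k}{\parent x}=\parentn{k+1}{x}$ for all $k\in\mathbb N$. Third, $\mathcal T$ being leafless, every vertex has at least one child.

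Granting these, (i) is immediate: $\mathscr G_v=\{u:\parentn{n}{u}=\parentn{n}{v}\text{ for some }n\}=\bigcup_n G_{v,n}$, and since the $G_{v,n}$ form an increasing chain this union telescopes into the disjoint union $\bigsqcup_n(G_{v,n}\setminus G_{v,n-1})=\bigsqcup_n A(v,n)$. For (iii), $u\in\child{A(\parent v,n)}$ means $\parent u\in A(\parent v,n)$, which by the characterization and the second fact says $\parentn{n+1}{u}=\parentn{n+1}{v}$ and $\parentn{n}{u}\ne\parentn{n}{v}$, i.e.\ $u\in A(v,n+1)$. For (iv) the inclusion $\parent{A(v,n)}\subseteq A(\parent v,n-1)$ is the same computation read backwards, and the reverse inclusion uses the third fact: given $z\in A(\parent v,n-1)$, any child $u$ of $z$ satisfies $\parentn{n}{u}=\parentn{n-1}{z}=\parentn{n}{v}$ and $\parentn{n-1}{u}=\parentn{n-2}{z}\ne\parentn{n-1}{v}$, so $u\in A(v,n)$ with $\parent u=z$; the boundary case $n=1$ is checked directly.

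For (ii) the point is only that $\{\mathscr G_{v_m}\}_{m\in\mathbb Z}$ enumerates each generation exactly once, since the generations already partition $V$ as the classes of an equivalence relation. From \eqref{v-m-def} one reads off $\parent{v_m}=v_{m-1}$, hence $\parentn{k}{v_m}=v_{m-k}$, for all $m\in\mathbb Z$ and $k\in\mathbb N$; as the vertices $v_m$ are pairwise distinct, $\parentn{k}{v_m}=\parentn{k}{v_{m'}}$ forces $v_{m-k}=v_{m'-k}$ and so $m=m'$, whence the $\mathscr G_{v_m}$ are distinct classes. They exhaust $V$ because $\mathcal T$ is connected: for $u\in V$ the vertices $u$ and $v_0$ have a common ancestor, $\parentn{i}{u}=\parentn{j}{v_0}=v_{-j}$ for some $i,j\in\mathbb N$, and since $v_{-j}=\parentn{i}{v_{i-j}}$ this gives $\parentn{i}{u}=\parentn{i}{v_{i-j}}$, i.e.\ $u\in\mathscr G_{v_{i-j}}$.

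The step I expect to be the main obstacle is (v). With the characterization in hand, suppose $w\in A(v,n)$ and $u\in A(w,j)$ with $1\le j\le n$; then $\parentn{n}{u}=\parentn{n-j}{\parentn{j}{u}}=\parentn{n-j}{\parentn{j}{w}}=\parentn{n}{w}=\parentn{n}{v}$, so $u\in G_{v,n}$, and for $j\le n-1$ one further gets $\parentn{n-1}{u}=\parentn{n-1}{w}\ne\parentn{n-1}{v}$, whence $u\in A(v,n)$; the case $j=0$ is just the hypothesis $w\in A(v,n)$. The delicate point is the extremal value of $j$, where the two defining conditions of $A(v,n)$ interact and one must track how $\parentn{n}{w}=\parentn{n}{v}$ together with $\parentn{n-1}{w}\ne\parentn{n-1}{v}$ propagates down from $w$ to $u$; this is the only place where more than routine bookkeeping is needed, and an alternative is to run an induction on $n$ using (iii) and (iv) to reduce the pair $(w,v)$ to $(\parent w,\parent v)$.
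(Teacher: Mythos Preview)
Your treatment of (i)--(iv) is correct and is essentially the paper's approach spelled out in detail (the paper cites \cite{JJS2012} for (i) and (ii) and calls (iii), (iv) ``routine verifications''); the element-wise description of $A(v,n)$ as the set of $u$ for which $n$ is the \emph{least} index with $\parentn{n}{u}=\parentn{n}{v}$ is exactly the right device, and your computation for (v) in the range $1\le j\le n-1$ is clean and complete.

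The case $j=n$ in (v), which you flag as ``delicate'' and leave open, is not a matter of extra bookkeeping: the inclusion $A(w,n)\subseteq A(v,n)$ is \emph{false} for every $n\ge 1$ with $A(v,n)\ne\emptyset$. The defining conditions are symmetric in $v$ and $w$, so $w\in A(v,n)$ forces $v\in A(w,n)$; but $v\in A(v,0)$, and the sets $A(v,k)$ are pairwise disjoint by (i), hence $v\notin A(v,n)$. Your suggested induction via (iii)--(iv) therefore cannot close this case, and neither does the paper's own argument: its chain $A(w,j)\subseteq\childn{j}{\parentn{j}{A(v,n)}}=A(v,n)$ iterates (iv) down to $A(\parentn{j}{v},n-j)$ and then (iii) back up, but (iii) is stated only for index $\ge 1$, so at $j=n$ one is left with $\childn{n}{A(\parentn{n}{v},0)}=\childn{n}{\parentn{n}{v}}=G_{v,n}\supsetneq A(v,n)$. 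What your computation genuinely establishes at $j=n$ is the weaker $A(w,n)\subseteq G_{v,n}$; the statement as written holds only for $0\le j\le n-1$. (A smaller boundary issue of the same flavour: your ``$n=1$ checked directly'' in (iv) also breaks when $v$ is the unique child of $\parent v$, since then $A(v,1)=\emptyset$ while $A(\parent v,0)=\{\parent v\}$.)
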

\begin{proof}
(i) It follows from \cite[Proposition~2.1.12(v)]{JJS2012} that
\beqn
\mathscr G_v =\bigcup_{n =0}^{\infty} \childn{n}{\parentn{n}{v}}, \quad v \in V. 
\eeqn Since $\childn{k}{\parentn{k}{v}} \subseteq \childn{l}{\parentn{l}{v}}$ for $k \leqslant l,$ (i) follows from \eqref{A-v-n-def}.

(ii) This may be easily deduced from \cite[Proposition~2.1.12(vi)$\&$(vii)]{JJS2012} (by letting $u = v_0$). 

(iii) $\&$ (iv) These are routine verification using  
\beqn
\childn{n}{v} = \childn{n-1}{\child v}, \, \parentn{n}{v} = \parentn{n-1}{\parent v}, \, v \in V, \, n \Ge 1.
\eeqn

(v) If $n=0,$ then $v=w,$ and we are done. Suppose that $n \Ge 1.$ Let $w \in A(v, n)$ for some $n \in \mathbb N$ and fix $j \in \{0, \ldots, n\}$. Then 
$$A(w, j) \subseteq \childn{j}{\parentn{j}{w}} \subseteq \childn{j}{\parentn{j}{A(v, n)}} = A(v, n),$$
where the last equality follows from repeated applications of (iii) and (iv).  
\end{proof}
\begin{remark} \label{remark-alpha-finite}
Suppose that the cardinality of a generation $\mathscr G_v$ is finite for some $v \in V$. Then by Lemma~\ref{preparatory}(i), there exists $n_0 \in \mathbb N$ such that $A(v,n) = \emptyset$ for all $n \geqslant n_0$. Also, since $A(v,n) \subseteq \mathscr G_v$ for all $n \in \mathbb N$, it follows that $A(v, n)$ is finite for all $n \in \mathbb N$. Hence, $\alpha_{\lambdab}(v) < \infty$ (see \eqref{alpha-m-lambdab}). 
\end{remark}
\begin{example}
Let $k$ be any positive integer. Consider the rootless directed tree $\mathcal T_{k,\infty}= (V, \mathcal E)$ described as follows (the reader is referred to \cite[Chapter 6]{JJS2012} for more details about such trees):
\beqn
V &=& \{(0,0), (m,n) : m \geqslant 1, \, 1 \Le n \Le k\} \cup \{(-m, 0) : m \geqslant 1\}; \\
\mathcal E &=& \{\big((0,0), (1,j)\big) : 1\leqslant j \leqslant k\} \cup \{\big((m,n), (m+1,n)\big) : m, n \geqslant 1\}\\
&& \cup\, \{\big((-m,0), (-m+1, 0)\big) : m \geqslant 1\}.
\eeqn
An example of $\mathcal T_{3,\infty}$ is depicted in the following figure.
\begin{figure}[H]
\includegraphics[scale=.7]{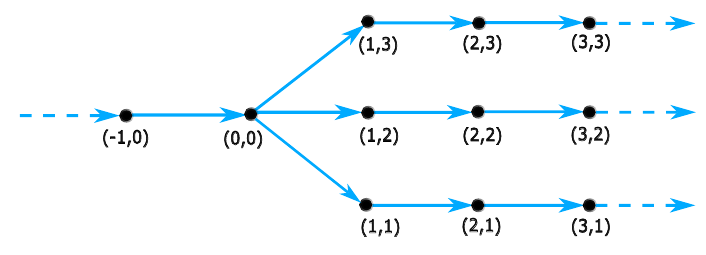}\caption{The 
rootless directed tree $\mathcal T_{3,\infty}$}
\end{figure}
\noindent
Note that the cardinality of $\mathscr G_v$ is finite for all $v \in V$. Hence, by Remark~\ref{remark-alpha-finite}, $\alpha_{\lambdab}(v) < \infty$ for all $v \in V$. 
\eof
\end{example}

The following proposition says that if $\alpha_{\lambdab}$ is finite at some vertex $v$ then it is finite at every vertex in $\mathcal G_v.$
\begin{proposition} \label{generation-invariance}
For $v, w$ belonging to the same generation, 
$\alpha_{\lambdab}(v) < \infty$ if and only if $\alpha_{\lambdab}(w) < \infty.$ 
\end{proposition}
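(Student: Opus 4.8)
The plan is to use that ``belonging to the same generation'' means precisely $\mathscr G_v = \mathscr G_w$, so that the sets $A(v,n)$ and $A(w,n)$ entering \eqref{alpha-m-lambdab} coincide for all large $n$, while the normalizing factors $\lambdab^{(n)}(v)$ and $\lambdab^{(n)}(w)$ differ only by a fixed positive constant. Consequently the two series defining $\alpha_{\lambdab}(v)$ and $\alpha_{\lambdab}(w)$ have ``tails'' that are positive scalar multiples of one another, and it only remains to check that the finitely many initial terms are each finite.

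First I would fix $N \in \mathbb N$ with $z := \parentn{N}{v} = \parentn{N}{w}$, which exists since $v$ and $w$ lie in the same generation. For every $n \Ge N$ one has $\parentn{n}{v} = \parentn{n-N}{z} = \parentn{n}{w}$; inserting this into \eqref{A-v-n-def} yields $A(v,n) = A(w,n)$ for all $n \Ge N+1$ (for such $n$, both $\childn{n}{\parentn{n}{v}}$ and $\childn{n-1}{\parentn{n-1}{v}}$ are determined by $z$ alone and hence coincide with the corresponding sets built from $w$). Splitting the product in \eqref{lambda-k} at level $N$ gives $\lambdab^{(n)}(v) = \lambdab^{(N)}(v)\,\lambdab^{(n-N)}(z)$, and likewise for $w$, so that $\lambdab^{(n)}(v)/\lambdab^{(n)}(w) = \lambdab^{(N)}(v)/\lambdab^{(N)}(w) =: c_0 \in (0,\infty)$ for all $n \Ge N$. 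Hence, for $n \Ge N+1$ and $u \in A(v,n) = A(w,n)$ we have $\lambdab^{(n)}(u)/\lambdab^{(n)}(v) = c_0^{-1}\,\lambdab^{(n)}(u)/\lambdab^{(n)}(w)$, and squaring and summing over $n \Ge N+1$ gives
\[
\sum_{n \Ge N+1}\sum_{u \in A(v,n)}\Big(\frac{\lambdab^{(n)}(u)}{\lambdab^{(n)}(v)}\Big)^2 \;=\; c_0^{-2}\sum_{n \Ge N+1}\sum_{u \in A(w,n)}\Big(\frac{\lambdab^{(n)}(u)}{\lambdab^{(n)}(w)}\Big)^2 ,
\]
so these two tail series are simultaneously finite or infinite.

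Finally I would verify that the discarded head $\sum_{n=0}^{N}\sum_{u \in A(v,n)}(\lambdab^{(n)}(u)/\lambdab^{(n)}(v))^2$ is finite; since $A(v,n)$ itself may be infinite, this is not automatic. The $n=0$ summand equals $1$, and for $1 \Le n \Le N$ the inclusion $A(v,n) \subseteq \childn{n}{\parentn{n}{v}}$ together with the identity $S_{\lambdab}^{n} e_{\parentn{n}{v}} = \sum_{u \in \childn{n}{\parentn{n}{v}}}\lambdab^{(n)}(u)\,e_u$ gives
\[
\sum_{u \in A(v,n)}\Big(\frac{\lambdab^{(n)}(u)}{\lambdab^{(n)}(v)}\Big)^2 \;\Le\; \frac{\|S_{\lambdab}^{n} e_{\parentn{n}{v}}\|^2}{\lambdab^{(n)}(v)^2} \;<\; \infty ,
\]
since $S_{\lambdab}$, and hence $S_{\lambdab}^{n}$, is bounded. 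Combining the two displays, $\alpha_{\lambdab}(v)$ and $\alpha_{\lambdab}(w)$ each equal a finite quantity plus a tail series, and these tail series differ only by the factor $c_0^{-2}$; this yields $\alpha_{\lambdab}(v) < \infty$ if and only if $\alpha_{\lambdab}(w) < \infty$. The only step needing care is the bookkeeping in the second paragraph --- pinning down exactly when $A(v,n)$ stabilizes and keeping track of the constant $c_0$; beyond that, the argument uses only the boundedness of $S_{\lambdab}$ (in particular, no left-invertibility is required here).
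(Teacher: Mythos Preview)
Your proof is correct, and the tail argument---stabilization of $A(v,n)=A(w,n)$ for $n\Ge N+1$ together with the constant ratio $\lambdab^{(n)}(v)/\lambdab^{(n)}(w)=c_0$---matches the paper's exactly. Where you diverge is in handling the finitely many head terms: the paper works purely combinatorially, taking $N$ minimal so that $w\in A(v,N)$, then invoking Lemma~\ref{preparatory}(v) to get $A(w,j)\subseteq A(v,N)$ for $0\Le j\Le N$ and bounding each head term of $\alpha_{\lambdab}(w)$ by (a constant times) the single $N$-th term of $\alpha_{\lambdab}(v)$, which is finite by hypothesis. You instead observe that each head term is \emph{unconditionally} finite because $\sum_{u\in A(v,n)}\lambdab^{(n)}(u)^2\Le\|S_{\lambdab}^{n}e_{\parentn{n}{v}}\|^2<\infty$ by boundedness of $S_{\lambdab}$. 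Your route is shorter and avoids Lemma~\ref{preparatory}(v) and the minimality bookkeeping; the trade-off is that the paper's argument is purely graph-theoretic and would go through for arbitrary positive weight systems $\lambdab$, whereas yours genuinely needs $S_{\lambdab}\in\mathcal B(\ell^2(V))$---which is, however, a standing assumption throughout the paper, so nothing is lost here.
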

\begin{proof} Assume that $v, w \in V$ belong to the same generation. 
Thus, there exists $n \in \mathbb N$ 
such that $\parentn{n}{v} = \parentn{n}{w}$. Let $n_0$ be the least integer such that 
\beq\label{vm-wm-1}
\parentn{n_0}{v} = \parentn{n_0}{w}.
\eeq 
 This gives that
$\childn{n}{\parentn{n}{v}} = \childn{n}{\parentn{n}{w}}$ for all $n \geqslant n_0,$ 
and hence, by \eqref{A-v-n-def}, 
\beq\label{A-vm-wm}
A(v, n) = A(w, n) \ \mbox{ for all }\ n \geqslant n_0+1.
\eeq
Since $\mathscr G_{v} = \bigsqcup_{n\in \mathbb N} A(v, n)$ (see Lemma~\ref{preparatory}(i)), $w \in A(v, k)$ for some $k \in \mathbb N$. If $k=0,$ then $v=w,$ and we are done. Suppose that $k \Ge 1.$ Hence, by \eqref{A-v-n-def}, we get that
\[\parentn{k}{w} = \parentn{k}{v} \ \mbox{ and }\ \parentn{k-1}{w} \ne \parentn{k-1}{v}.\]
Since $n_0$ is the least integer such that $\parentn{n_0}{w} = \parentn{n_0}{v}$, it follows that $k = n_0$. Hence, $w \in A(v, n_0)$. 
Observe that
\beqn
\alpha_{\lambdab}(v) &=& 
\sum_{n=0}^{n_0} \Big(\sum_{u \in A(v, n)} \frac{\lambdab^{(n)}(u)}{\lambdab^{(n)}(v)}\Big)^2 + \sum_{n = n_0+1}^{\infty} \Big(\sum_{u \in A(v, n)} \frac{\lambdab^{(n)}(u)}{\lambdab^{(n)}(v)}\Big)^2\\
&\overset{\eqref{vm-wm-1}\&\eqref{A-vm-wm}} =& \sum_{n=0}^{n_0} \sum_{u \in A(v, n)} \Big(\frac{\lambdab^{(n)}(u)}{\lambdab^{(n)}(v)}\Big)^2 \\
&+& \Big(\frac{\lambdab^{(n_0)}(w)}{\lambdab^{(n_0)}(v)}\Big)^2 \sum_{n = n_0+1}^{\infty} \sum_{u \in A(w, n)} \Big(\frac{\lambdab^{(n)}(u)}{\lambdab^{(n)}(w)}\Big)^2.
\eeqn
Assume now that $\alpha_{\lambdab}(v) < \infty.$ It then follows from the identity above that 
\beq \label{tail} \displaystyle \sum_{n = n_0+1}^{\infty} \sum_{u \in A(w, n)} \Big(\frac{\lambdab^{(n)}(u)}{\lambdab^{(n)}(w)}\Big)^2 < \infty.
\eeq
Further, note that for any $j \in \{0, 1, \ldots, n_0\}$, 
\beqn
&&\sum_{u \in A(w, j)} \Big(\frac{\lambdab^{(j)}(u)}{\lambdab^{(j)}(w)}\Big)^2 \\
&=& \Big(\frac{\lambdab^{(n_0)}(v)}{\lambdab^{(j)}(w) \lambdab^{(n_0-j)}(\parentn{j}{w})}\Big)^2 \sum_{u \in A(w, j)} \Big(\frac{\lambdab^{(n_0)}(u)}{\lambdab^{(n_0)}(v)}\Big)^2, 
\eeqn
where in the above equality, we used the fact that if $u \in A(w, j)$, then $\parentn{j}{u} = \parentn{j}{w}$. Further, by Lemma~\ref{preparatory}(v), $A(w, j) \subseteq A(v, n_0)$ for all $0 \Le j \Le n_0$. Thus, for $0 \Le j \Le n_0,$ the above equality gives that 
\beqn
&&\sum_{u \in A(w, j)} \Big(\frac{\lambdab^{(j)}(u)}{\lambdab^{(j)}(w)}\Big)^2 \\
& \leqslant & \Big(\frac{\lambdab^{(n_0)}(v)}{\lambdab^{(j)}(w) \lambdab^{(n_0-j)}(\parentn{j}{w})}\Big)^2 \sum_{u \in A(v, n_0)} \Big(\frac{\lambdab^{(n_0)}(u)}{\lambdab^{(n_0)}(v)}\Big)^2.
\eeqn
This proves that $\displaystyle\sum_{n=0}^{n_0} \sum_{u \in A(w, n)} \Big(\frac{\lambdab^{(n)}(u)}{\lambdab^{(n)}(w)}\Big)^2 < \infty,$  and hence, by \eqref{tail}, we get $\alpha_{\lambdab}(w) < \infty.$ Interchanging the roles of $v$ and $w$, we obtain that if $\alpha_{\lambdab}(w) < \infty,$ then $\alpha_{\lambdab}(v) < \infty.$ This completes the proof. 
\end{proof}

\section{Structure of hyper-range} \label{S3}

We present below the main result of this section, which is the first step towards our proof of Theorem~\ref{Wold-type-thm}.
\begin{theorem}\label{hyper-range-tree}
Let $S_{\lambdab}$ be a left-invertible weighted shift on a rootless directed tree $\mathcal T=(V, \mathcal E).$ For a bilateral path ${\mf v}=\{v_m\}_{m \in \mathbb Z}$ in $\mathcal T$ $($see \eqref{v-m-def}$),$ let $g_{m, \mf v},$ $m \in \mathbb Z$, be given by 
\beq \label{gm-def}
g_{m, \mf v} = \begin{cases} \displaystyle \sum_{n = 0}^{\infty} \sum_{u \in A(v_m,n)} \frac{\lambdab^{(n)}(u)}{\lambdab^{(n)}(v_m)} e_u 
& \mathrm{if}~ \alpha_{\lambdab}(v_m)< \infty, \\
0 & \mathrm{otherwise}.
\end{cases}
\eeq 
Then $g_{m, \mf v} \in \ell^2(V)$ for every $m \in \mathbb Z$ and  
\beq \label{structure-hr}
 S_{\lambdab}^\infty(\ell^2(V)) &=& \displaystyle  \bigoplus_{m \in \mathbb Z} \mathrm{span}\,\{g_{m, \mf v}\}, \\ \label{action-hr}
S_{\lambdab}\, g_{m, \mf v} &=& \lambda_{v_{m+1}} g_{m+1, \mf v}, \quad m \in \mathbb Z.
\eeq
\end{theorem}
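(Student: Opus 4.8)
The plan is to describe the hyper-range by working generation-by-generation along a fixed bilateral path $\mf v = \{v_m\}_{m \in \mathbb Z}$. Since $V = \bigsqcup_{m \in \mathbb Z} \mathscr G_{v_m}$ by Lemma~\ref{preparatory}(ii), it suffices to understand, for each $m$, the intersection of $S_{\lambdab}^\infty(\ell^2(V))$ with $\ell^2(\mathscr G_{v_m})$, and then to check that the pieces fit together as an orthogonal direct sum with the asserted shift action. First I would record the elementary action of $S_{\lambdab}^n$ on basis vectors: $S_{\lambdab}^n e_u = \lambdab^{(n)}(u)\, e_{\childn{n}{u}}$ abuse-of-notation being interpreted via $\mathsf{Chi}^{\langle n\rangle}$, so that for $f \in \ell^2(V)$ the function $S_{\lambdab}^n f$ is supported on vertices lying $n$ levels below $\supp f$. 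Consequently, a vector $h \in S_{\lambdab}^\infty(\ell^2(V))$ lies in the range of $S_{\lambdab}^n$ for every $n$, which forces strong constraints on how the coefficients of $h$ are distributed within each generation.

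The core computation is to solve, for fixed $m$ and each $n \geqslant 0$, the equation $S_{\lambdab}^n f_n = h$ where $h$ is supported in $\mathscr G_{v_m}$ and $f_n \in \ell^2(V)$. Using Lemma~\ref{preparatory}(iii)--(iv) to track how $A(v_m, k)$ decomposes under taking $n$-fold parents and children, one sees that $f_n$ must be supported on $\mathscr G_{v_{m-n}}$ (or wherever level $n$ above $v_m$ lands) and that the coefficient of $h$ at a vertex $u \in A(v_m, k)$ is determined by the coefficient of $f_n$ at $\parentn{n}{u}$ times the appropriate product of weights $\lambdab^{(n)}$. Pushing $n \to \infty$ and using left-invertibility (so the relevant weight products are bounded below), I expect to extract that $h$ must be a scalar multiple of the formal vector $\sum_{n=0}^{\infty} \sum_{u \in A(v_m, n)} \frac{\lambdab^{(n)}(u)}{\lambdab^{(n)}(v_m)} e_u$: the ratios $h_u / h_{v_m}$ are forced to equal $\lambdab^{(n)}(u)/\lambdab^{(n)}(v_m)$ for $u \in A(v_m, n)$ because each such $u$ has a unique ancestor chain back to the generation and the compatibility of the preimages across all $n$ pins down the ratio uniquely. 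This is where the quantity $\alpha_{\lambdab}(v_m)$ enters: the formal vector lies in $\ell^2(V)$ precisely when $\alpha_{\lambdab}(v_m) < \infty$, and in that case $\|g_{m, \mf v}\|^2 = \alpha_{\lambdab}(v_m)$; when $\alpha_{\lambdab}(v_m) = \infty$ there is no nonzero $\ell^2$ vector supported in $\mathscr G_{v_m}$ in the hyper-range, so $g_{m, \mf v} = 0$ is the right convention. I would also need to verify the converse inclusion, namely that $g_{m, \mf v}$ (when nonzero) actually belongs to $S_{\lambdab}^\infty(\ell^2(V))$ — this amounts to exhibiting, for each $n$, an explicit $\ell^2$ preimage under $S_{\lambdab}^n$, which is again a vector of the same shape supported one generation farther up the path, with norm controlled by $\alpha_{\lambdab}$ at that higher vertex; finiteness of $\alpha_{\lambdab}$ propagates along the generation by Proposition~\ref{generation-invariance}, but I must be careful that it propagates \emph{up} the path as well, which should follow from the same identities relating $\alpha_{\lambdab}(v_m)$ to a weighted tail at $v_{m-1}$.

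Finally, the orthogonal decomposition \eqref{structure-hr} follows because the $g_{m, \mf v}$ have pairwise disjoint supports (they live in distinct generations $\mathscr G_{v_m}$, which partition $V$), and the intersection of the hyper-range with each $\ell^2(\mathscr G_{v_m})$ is exactly $\mathrm{span}\{g_{m,\mf v}\}$ by the two inclusions above; since any $h$ in the hyper-range decomposes as $\sum_m P_{\mathscr G_{v_m}} h$ with each summand again in the hyper-range (the hyper-range is reducing for the diagonal generation projections because $S_{\lambdab}$ maps $\ell^2(\mathscr G_{v_m})$ into $\ell^2(\mathscr G_{v_{m+1}})$), we get $h \in \bigoplus_m \mathrm{span}\{g_{m,\mf v}\}$. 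The action formula \eqref{action-hr} is then a direct check: $S_{\lambdab} g_{m, \mf v} = \sum_{n} \sum_{u \in A(v_m,n)} \frac{\lambdab^{(n)}(u)}{\lambdab^{(n)}(v_m)} \lambda_{\mathsf{child}} e_{\mathsf{child}(u)}$, and reindexing via Lemma~\ref{preparatory}(iii) (with $A(v_{m+1}, n+1) = \child{A(v_m, n)}$ after matching generations, together with the $n=0$ term contributing $A(v_{m+1},1)$) collapses this to $\lambda_{v_{m+1}} g_{m+1, \mf v}$, after checking the weight bookkeeping $\lambdab^{(n+1)}(\mathsf{child}(u))/\lambdab^{(n+1)}(v_{m+1}) \cdot \lambda_{v_{m+1}} = \lambda_{\mathsf{child}(u)} \cdot \lambdab^{(n)}(u)/\lambdab^{(n)}(v_m)$. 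The main obstacle I anticipate is the rigorous passage to the limit $n \to \infty$ in the forward direction — showing that the compatibility of $\ell^2$-preimages across all $n$ genuinely forces the coefficient ratios rather than merely constraining them — and keeping the weight-product bookkeeping straight when the bilateral path is used to index generations that themselves have infinite branching.
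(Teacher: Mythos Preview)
Your overall strategy matches the paper's: decompose along generations, pin down coefficient ratios for vectors in the hyper-range, verify the $g_{m,\mf v}$ actually lie in the hyper-range, and compute the shift action by reindexing via Lemma~\ref{preparatory}(iii)--(iv). The difference is organizational. The paper isolates a clean intermediate characterization (Lemma~\ref{hyper-range}): $f \in S_{\lambdab}^n(\ell^2(V))$ if and only if $f/\lambdab^{(n)}$ is constant on $\childn{n}{v}$ for every $v$, with the $\ell^2$-preimage produced automatically from left-invertibility. This single lemma dissolves both obstacles you flag. First, there is no passage to a limit: for $f$ in the hyper-range and $u \in A(v_m,n)$, both $u$ and $v_m$ lie in $\childn{n}{\parentn{n}{v_m}}$, so the constancy condition for that fixed $n$ already forces $f(u)/\lambdab^{(n)}(u) = f(v_m)/\lambdab^{(n)}(v_m)$, giving $f = \sum_m f(v_m) g_{m,\mf v}$ directly. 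Second, to show $g_{m,\mf v} \in S_{\lambdab}^\infty(\ell^2(V))$ the paper does \emph{not} exhibit an explicit preimage of the same shape (which, as you note, would need $\alpha_{\lambdab}(v_{m-n}) < \infty$ in advance); instead it just checks the constancy criterion of Lemma~\ref{hyper-range}, and the $\ell^2$-preimage comes for free from left-invertibility. The upward propagation of finiteness of $\alpha_{\lambdab}$ along the path is then deduced \emph{a posteriori} from the theorem (see Remark~\ref{new-rem}), using that $S_{\lambdab}$ maps the hyper-range onto itself, rather than being an ingredient of the proof. Your route would work, but extracting Lemma~\ref{hyper-range} first is what makes the argument short and avoids the circularity you were worried about.
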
  

To prove Theorem~\ref{hyper-range-tree}, we need a variant of \cite[Theorem 2.1.1]{K2011} and \cite[Lemma 2.3]{ACT2020} (see \cite[Remark~45]{BJJS1}). First, let $\lambdab^{(n)}$ be as in \eqref{lambda-k}, and 
recall from \cite[Lemma~5.4]{ACJS2019}, \cite[Lemma~6.1.1]{JJS2012} that 
for any positive integer $n,$ 
\beq 
\label{known-rmk-new-new-new}
S_{\lambdab}^{*m}S_{\lambdab}^m e_v = \|S^n_{\lambdab}e_{v}\|^2 e_v,  \quad v \in V, \\
\label{known-rmk-new}
S^n_{\lambdab}e_{u} = \displaystyle \sum_{v \in \childn{n}{u}} \lambdab^{(n)}(v) e_v, \quad u \in V, \\ \label{known-rmk-new-new}
\|S^n_{\lambdab}e_{u}\|^2 = \displaystyle \sum_{v \in \childn{n}{u}} \big(\lambdab^{(n)}(v)\big)^2, \quad u \in V.
\eeq 
We now give a simple description of the hyper-range of a left-invertible weighted shift on a rootless directed tree (cf. \cite[Theorem 2.1.1]{K2011}, \cite[Remark 45]{BJJS1} and \cite[Lemma 2.3]{ACT2020}).
\begin{lemma}\label{hyper-range}
Let $S_{\lambdab}$ be a left-invertible weighted shift on a rootless directed tree $\mathcal T=(V, \mathcal E)$. Then, for any integer $n \Ge 1,$ 
\beq \label{n-range-eq} 
S_{\lambdab}^n(\ell^2(V)) = \left\{ f \in \ell^2(V) : \frac{f}{\lambdab^{(n)}}\Big|_{\childn{n}{v}}  \mbox{ is constant for all } v \in V\right\},
\eeq
where $\lambdab^{(n)}$ is given by \eqref{lambda-k}.
Moreover, 
\beq\label{hyper-range-eq} 
S_{\lambdab}^\infty(\ell^2(V)) &=& \bigg\{ f \in \ell^2(V) : \frac{f}{\lambdab^{(n)}}\Big|_{\childn{n}{v}}  \mbox{ is constant for each } v \in V \notag\\
&& \mbox{ and for every integer } n \geqslant 1\bigg\}.
\eeq
\end{lemma}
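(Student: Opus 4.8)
\textbf{Proof proposal for Lemma~\ref{hyper-range}.}
The plan is to first establish the one-step description of the range $S_{\lambdab}(\ell^2(V))$, then iterate to get \eqref{n-range-eq} for all $n \geqslant 1$, and finally intersect over $n$ to obtain \eqref{hyper-range-eq}. For the one-step case, recall that $\varLambda_{\mathcal T}$ sends $f$ to the function $v \mapsto \lambda_v f(\parent v)$. Thus $g \in S_{\lambdab}(\ell^2(V))$ precisely when there is $f \in \ell^2(V)$ with $g(v) = \lambda_v f(\parent v)$ for all $v \in V$. Since the weights are positive, this forces $f(u) = g(v)/\lambda_v$ for every $v \in \child{u}$, so the value $g(v)/\lambda_v = g(v)/\lambdab^{(1)}(v)$ must be independent of the choice of $v \in \child{u}$; conversely, if $g/\lambdab^{(1)}$ is constant on each $\child u$, this common value defines $f(u)$, and one checks $f \in \ell^2(V)$ using left-invertibility (equivalently $\inf_v \|S_{\lambdab} e_v\| > 0$ together with \eqref{known-rmk-new-new}), which bounds $\sum_u |f(u)|^2$ by a constant multiple of $\|g\|^2$. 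This gives \eqref{n-range-eq} for $n=1$.

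For general $n$, I would argue by induction. The cleanest route is to observe that $S_{\lambdab}^n$ is itself, up to the identification via \eqref{known-rmk-new}, a ``weighted shift of step $n$'': by \eqref{known-rmk-new}, $S_{\lambdab}^n e_u = \sum_{v \in \childn{n}{u}} \lambdab^{(n)}(v) e_v$. Repeating the one-step reasoning with $\child{\cdot}$ replaced by $\childn{n}{\cdot}$ and $\lambda_v$ replaced by $\lambdab^{(n)}(v)$, one gets that $g \in S_{\lambdab}^n(\ell^2(V))$ iff $g/\lambdab^{(n)}$ is constant on $\childn{n}{v}$ for every $v$; the $\ell^2$-bound on the preimage again follows from left-invertibility, since $\|S_{\lambdab}^n e_v\| \geqslant \|S_{\lambdab}^{-1}\|_{}^{-n}$ is bounded below by a positive constant, so \eqref{known-rmk-new-new} controls the norm. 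Alternatively, one can iterate \eqref{n-range-eq} for $n=1$ directly: if $g = S_{\lambdab} h$ with $h = S_{\lambdab}^{n-1} f$, then $g/\lambdab^{(1)}$ is constant on each $\child u$ and $h/\lambdab^{(n-1)}$ is constant on each $\childn{n-1}{\cdot}$, and chasing these two conditions through the factorization $\lambdab^{(n)} = \lambdab^{(1)} \cdot (\lambdab^{(n-1)} \circ \mathsf{par})$ on the tree yields constancy of $g/\lambdab^{(n)}$ on $\childn{n}{v}$; I expect the bookkeeping here to be the only slightly delicate point, so I would favor the ``step-$n$ shift'' formulation to keep it transparent.

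Finally, \eqref{hyper-range-eq} is immediate: by definition $S_{\lambdab}^\infty(\ell^2(V)) = \bigcap_{n \geqslant 0} S_{\lambdab}^n(\ell^2(V))$, and intersecting the characterizations \eqref{n-range-eq} over all $n \geqslant 1$ gives exactly the stated condition (the $n=0$ term is all of $\ell^2(V)$ and imposes nothing). The main obstacle, such as it is, is purely the verification that the constructed preimage $f$ lies in $\ell^2(V)$ rather than merely being a well-defined function on $V$; this is where left-invertibility of $S_{\lambdab}$ (hence the uniform lower bound on $\|S_{\lambdab}^n e_v\|$) is essential, and without it the range would only be described modulo this integrability issue. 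Everything else is a direct unwinding of the definition of $\varLambda_{\mathcal T}$ together with \eqref{known-rmk-new}.
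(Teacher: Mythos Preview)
Your proposal is correct and follows essentially the same approach as the paper: the paper proves \eqref{n-range-eq} directly for general $n$ by constructing the preimage $g(v)=f(u)/\lambdab^{(n)}(u)$ for $u\in\childn{n}{v}$ and bounding $\sum_v|g(v)|^2$ via $\|f\|^2=\sum_v|g(v)|^2\|S_{\lambdab}^n e_v\|^2$ together with left-invertibility, which is exactly your ``step-$n$ shift'' argument (the paper simply skips the separate $n=1$ warm-up and cites \cite[Lemma~2.3]{ACT2020} for the easy inclusion $\subseteq$). The deduction of \eqref{hyper-range-eq} by intersecting over $n$ is identical.
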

\begin{proof}
Let $n$ be a positive integer. Since $\parent{\cdot}$ is a function, the inclusion $\subseteq$ in \eqref{n-range-eq} can be obtained by imitating the proof of \cite[Lemma 2.3]{ACT2020}. For the remaining inclusion, 
let $f \in \ell^2(V)$ be such that $\frac{f}{\lambdab^{(n)}}\Big|_{\childn{n}{v}}$ is constant for each  $v \in V$. Define $g : V \to \mathbb C$ by
\beqn
g(v) = \frac{f(u)}{\lambdab^{(n)}(u)}, \quad v \in V~\mbox{and}~u \in \childn{n}{v}.
\eeqn
Since $V = \bigsqcup_{v \in V} \childn{n}{v}$, $g$ is well-defined and 
\beqn
\|f\|^2 &=& \sum_{v \in V} \sum_{u \in \childn{n}{v}} |f(u)|^2\\
&=& \sum_{v \in V} |g(v)|^2 \sum_{u \in \childn{n}{v}} \big(\lambdab^{(n)}(u)\big)^2 \\
& \overset{\eqref{known-rmk-new-new}}= &  \sum_{v \in V} |g(v)|^2 \|S_{\lambdab}^n e_v\|^2 \\  
& \geqslant & \Big(\inf_{v \in V} \|S^n_{\lambdab} e_v\|\Big)^2 \sum_{v \in V} |g(v)|^2.
\eeqn
This combined with the left-invertibility of $S^n_{\lambdab}$ (since so is $S_{\lambdab}$) implies that $g \in \ell^2(V)$ and $S^n_{\lambdab}(g)=f.$ Finally, \eqref{hyper-range-eq} follows from \eqref{n-range-eq}. 
\end{proof}

\begin{proof}[Proof of Theorem~\ref{hyper-range-tree}]
Fix $m \in \mathbb Z.$ By parts (i) and (ii) of Lemma~\ref{preparatory}, $g_{m, \mf v} \in \ell^2(V)$ (see \eqref{gm-def}). We claim that $g_{m, \mf v}$ belongs to the hyper-range of $ S_{\lambdab}.$  
In view of Lemma~\ref{hyper-range}, it suffices to verify that for every integer $n \Ge 1,$ 
\begin{align} \label{3.1-new}
& \mbox{$\displaystyle \frac{g_{m, \mf v}}{\lambdab^{(n)}}\Big|_{\childn{n}{v}}$ is constant for each $v \in V$}.
\end{align}
By Lemma~\ref{preparatory}(i),  
\beq \label{support-disjoint}
\mbox{the support of $g_{m, \mf v}$ is either empty or it is equal to $\mathscr G_{v_m}$.}
\eeq 
Hence we may assume that $g_{m, \mf v} \neq 0$ and only consider those $v \in V$ and integers $n \geqslant 1$ for which $\childn{n}{v} \cap \mathscr G_{v_m} \neq \emptyset.$ 
We contend that for some integer $k,$
\beq
\label{child-A-vm}
\childn{n}{v} \subseteq A(v_m, n+k).
\eeq
By Lemma~\ref{preparatory}(ii), $v \in \mathscr G_{v_l}$ for some $l \in \mathbb Z.$ Thus $\childn{n}{v} \subseteq \mathscr G_{v_{l+n}},$ and hence once again by Lemma~\ref{preparatory}(ii), $\mathscr G_{v_{l+n}} = \mathscr G_{v_m}.$ Equivalently, $l + n = m$, and hence, $v \in \mathscr G_{v_{m-n}}$. By Lemma~\ref{preparatory}(i), $v \in A(v_{m-n}, k)$ for some $k \in \mathbb N$. 
This together with Lemma~\ref{preparatory}(iii) now yields \eqref{child-A-vm}. 
Now, let $w \in \childn{n}{v}$ and note that by \eqref{gm-def} and \eqref{child-A-vm}, $g_{m, \mf v} (w)=\frac{\lambdab^{(n+k)}(w)}{\lambdab^{(n+k)}(v_m)}.$ It follows that 
\beqn
\frac{g_{m, \mf v}(w)}{\lambdab^{(n)}(w)} 
\overset{\eqref{lambda-k}} =
\frac{(\lambdab \circ \mathsf{par}^{\langle n \rangle}(w)) \cdots (\lambdab \circ \mathsf{par}^{\langle n+k-1\rangle}(w))}{\lambdab^{(n+k)}(v_m)} 
= \frac{\lambdab^{(k)}(v)}{\lambdab^{(n+k)}(v_m)},
\eeqn
which is constant. This completes the verification of \eqref{3.1-new}. In turn, this shows that   
$g_{m, \mf v} \in S_{\lambdab}^\infty(\ell^2(V)),$ and hence  
\beq \label{first-inclusion}
\bigvee \{g_{m, \mf v} : m \in \mathbb Z\} \subseteq S_{\lambdab}^\infty(\ell^2(V)).
\eeq 

To see the reverse inclusion in \eqref{first-inclusion}, let $f \in S_{\lambdab}^\infty(\ell^2(V)).$ By parts (i) and (ii) of Lemma~\ref{preparatory},  
\beq\label{f-eq}
f = \sum_{m = -\infty}^{\infty} \sum_{n = 0}^{\infty}\, \sum_{u \in A(v_m,n)}  f(u) e_u.
\eeq
Since 
$v_m \in \childn{n}{\parentn{n}{v_m}},$ $n \geqslant 1,$ by Lemma~\ref{hyper-range}, 
\beqn
\frac{f(u)}{\lambdab^{(n)}(u)} = \frac{f(v_m)}{\lambdab^{(n)}(v_m)}, \quad u \in A(v_m,n).
\eeqn
Substituting this into \eqref{f-eq}, we obtain 
\beqn
f = \sum_{m = -\infty}^{\infty} f(v_m) \Big(\sum_{n = 0}^{\infty} \, \sum_{u \in A(v_m,n)}  \frac{\lambdab^{(n)}(u)}{\lambdab^{(n)}(v_m)} e_u\Big) = \sum_{m \in \mathbb Z} f(v_m) g_{m, \mf v},
\eeqn
and hence, $f \in \bigvee\{g_{m, \mf v} : m \in \mathbb Z\}.$ This together with \eqref{first-inclusion}, now shows that $S_{\lambdab}^\infty(\ell^2(V))=\bigvee\{g_{m, \mf v} : m \in \mathbb Z\}.$ Also, by \eqref{support-disjoint} and Lemma~\ref{preparatory}(ii), $\{g_{m, \mf v}\}_{m \in \mathbb Z}$ is orthogonal. This completes the proof of the first part. 

To see the remaining part, we may assume that $S_{\lambdab}$ is not analytic. Thus there exists $m \in \mathbb Z$ such that $g_{m, \mf v} \neq 0.$ By parts (iii) and (iv) of  Lemma~\ref{preparatory},
\beqn
S_{\lambdab}g_{m, \mf v} &\overset{\eqref{known-rmk-new}}=& \sum_{n = 0}^{\infty} \, \sum_{u \in A(v_m,n)} \frac{\lambdab^{(n)}(u)}{\lambdab^{(n)}(v_m)} \sum_{w \in \child u} \lambda_w e_w \\
&=& \sum_{w \in \child{v_m}} \lambda_w e_w + \sum_{n = 1}^{\infty} \, \sum_{u \in A(v_m,n)} \frac{\lambdab^{(n)}(u)}{\lambdab^{(n)}(v_m)} \sum_{w \in \child u} \lambda_w e_w \\
&=& \lambda_{v_{m+1}} e_{v_{m+1}} +  \sum_{w \in A(v_{m+1}, 1)} \lambda_w e_w \\
&+& \sum_{n = 1}^{\infty} \, \sum_{w \in A(v_{m+1}, n+1)} \frac{\lambda_w\lambdab^{(n)}(\parent w)}{\lambdab^{(n)}(v_m)} e_w \\
&\overset{\eqref{lambda-k}}=& \lambda_{v_{m+1}}\sum_{n = 0}^{\infty} \, \sum_{u \in A(v_{m+1},n)} \frac{\lambdab^{(n)}(u)}{\lambdab^{(n)}(v_{m+1})} e_u \\
 &=& \lambda_{v_{m+1}}g_{m+1, \mf v}.
\eeqn
This gives \eqref{action-hr} provided $g_m \neq 0.$ 
Since $S_{\lambdab}$ is injective, this also shows that $g_{k, \mf v} \neq 0$ for every integer $k \Ge m.$ 
If there exists a smallest integer $m_0$ such that $g_{k, \mf v} \neq 0$ for every $k \Ge m_0,$ then by \eqref{structure-hr}, $g_{m_0, \mf v}\neq S_{\lambdab}(f)$ for any $f \in S_{\lambdab}^\infty(\ell^2(V)).$ 
However, $S_{\lambdab}({S_{\lambdab}^\infty(\ell^2(V))})=S_{\lambdab}^\infty(\ell^2(V)),$ which is not possible. 
Thus $g_{m, \mf v} \neq 0$ for every $m \in \mathbb Z,$ completing the proof of \eqref{action-hr}. 
\end{proof}
\begin{remark} \label{new-rem} Please note the following$:$
\begin{enumerate}
\item The last paragraph of the proof of Theorem~\ref{hyper-range-tree} shows that if $g_{m_0, \mf v} \neq 0$ for some $m_0 \in \mathbb Z,$ then $g_{m, \mf v} \neq 0$ for every $m \in \mathbb Z.$ This combined with \eqref{gm-def} yields that if $\alpha_{\lambdab}(v_{m_0}) < \infty$ for some $m_0 \in \mathbb Z,$ then $\alpha_{\lambdab}(v_{m}) < \infty$ for every $m \in \mathbb Z.$
\item Let ${\mf v}=\{v_m\}_{m \in \mathbb Z}$ and ${\mf w}=\{w_m\}_{m \in \mathbb Z}$ be two bilateral paths in $\mathcal T.$ By Theorem~\ref{hyper-range-tree},
\beqn 
S_{\lambdab}^\infty(\ell^2(V)) = \bigoplus_{m \in \mathbb Z} \mathrm{span}\,\{g_{m, \mf v}\} = \displaystyle  \bigoplus_{m \in \mathbb Z} \mathrm{span}\,\{g_{m, \mf w}\}, \\ 
S_{\lambdab}\, g_{m, \mf v} = \lambda_{v_{m+1}} g_{m+1, \mf v}, ~ S_{\lambdab}\, g_{m, \mf w} = \lambda_{w_{m+1}} g_{m+1, \mf w}, \quad m \in \mathbb Z.
\eeqn
\end{enumerate}
\end{remark}

As a corollary to Theorem~\ref{hyper-range-tree}, we obtain the following dichotomy$:$
\begin{corollary} \label{dicho-coro}
Let $S_{\lambdab}$ be a left-invertible weighted shift on a rootless directed tree $\mathcal T=(V, \mathcal E)$. Then, 
exactly one of the following holds$:$
\begin{itemize}
\item[(i)] $\alpha_{\lambdab}(v)= \infty$ for all $v \in V$ 
$($equivalently, $S_{\lambdab}$ is analytic$),$ 
\item[(ii)] $\alpha_{\lambdab}(v) < \infty$ for all $v \in V$ $($equivalently, $S_{\lambdab}^\infty(\ell^2(V))$ is of infinite dimension$).$  
\end{itemize}
\end{corollary}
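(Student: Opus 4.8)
The plan is to extract the dichotomy essentially for free from Theorem~\ref{hyper-range-tree}, combined with the two ``propagation'' facts already available: Proposition~\ref{generation-invariance} (finiteness of $\alpha_{\lambdab}$ is constant on a generation) and Remark~\ref{new-rem}(1) (finiteness of $\alpha_{\lambdab}$ at one vertex of a bilateral path forces it at every vertex of that path). First I would observe that (i) and (ii) are mutually exclusive because $V \neq \emptyset$, so the whole statement reduces to showing that at least one of them holds together with the two parenthetical equivalences.

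For the core step, fix an arbitrary $w \in V$ and suppose $\alpha_{\lambdab}(v) < \infty$ for some $v \in V$; I want to conclude $\alpha_{\lambdab}(w) < \infty$. Using that $\mathcal T$ is leafless and rootless, choose (via the construction in \eqref{v-m-def}) a bilateral path $\mf v = \{v_m\}_{m \in \mathbb Z}$ with $v_0 = v$. By Remark~\ref{new-rem}(1), $\alpha_{\lambdab}(v_m) < \infty$ for every $m \in \mathbb Z$. By Lemma~\ref{preparatory}(ii), $V = \bigsqcup_{m \in \mathbb Z} \mathscr G_{v_m}$, so $w \in \mathscr G_{v_m}$ for some $m$, and then Proposition~\ref{generation-invariance} yields $\alpha_{\lambdab}(w) < \infty$. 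Since $w$ was arbitrary, this shows: either $\alpha_{\lambdab}(v) = \infty$ for all $v \in V$ (case (i)), or $\alpha_{\lambdab}(v) < \infty$ for all $v \in V$ (case (ii)).

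It remains to identify these with analyticity and with infinite-dimensionality of the hyper-range. Here I would invoke Theorem~\ref{hyper-range-tree}: from \eqref{gm-def} the $n = 0$ summand contributes $e_{v_m}$ with coefficient $1$, so $g_{m, \mf v} \neq 0$ exactly when $\alpha_{\lambdab}(v_m) < \infty$. In case (i) all $g_{m, \mf v}$ vanish, so $S_{\lambdab}^\infty(\ell^2(V)) = \bigoplus_{m \in \mathbb Z}\mathrm{span}\{g_{m, \mf v}\} = \{0\}$, i.e.\ $S_{\lambdab}$ is analytic and the hyper-range is not infinite-dimensional. In case (ii) every $g_{m, \mf v} \neq 0$, and by \eqref{structure-hr} the hyper-range is the orthogonal direct sum of the countably many one-dimensional spaces $\mathrm{span}\{g_{m, \mf v}\}$, $m \in \mathbb Z$, hence infinite-dimensional (and $S_{\lambdab}$ is not analytic). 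This closes the equivalences.

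I do not expect a genuine obstacle: the argument is bookkeeping once the three input facts are in hand. The only points requiring a little care are the (legitimate) use of the axiom of choice to run a bilateral path through a prescribed vertex, and being explicit that $\mathscr G_w$ for an arbitrary $w$ is one of the generations $\mathscr G_{v_m}$ occurring in the partition of Lemma~\ref{preparatory}(ii), so that Proposition~\ref{generation-invariance} actually applies.
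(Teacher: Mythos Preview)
Your proposal is correct and follows essentially the same route as the paper: propagate finiteness of $\alpha_{\lambdab}$ along a bilateral path via Remark~\ref{new-rem}(1), then across each generation via Proposition~\ref{generation-invariance}, and use Lemma~\ref{preparatory}(ii) to cover all of $V$; the parenthetical equivalences are then read off from Theorem~\ref{hyper-range-tree} and \eqref{gm-def} exactly as you indicate. Your write-up is in fact slightly more explicit than the paper's (e.g.\ spelling out why $g_{m,\mf v}\neq 0$ via the $n=0$ summand), but there is no substantive difference in the argument.
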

\begin{proof} We claim that 
\beq \label{one-all}
\mbox{$\alpha_{\lambdab}(v) < \infty$ for some $v \in V$, then $\alpha_{\lambdab}(w) < \infty$ for every $w \in V.$}
\eeq
Suppose that $\alpha_{\lambdab}(v) < \infty$ for some $v \in V$. Let $\{v_m\}_{m \in \mathbb Z}$ be a bilateral path with $v_0=v.$ By Remark~\ref{new-rem}, $\alpha_{\lambdab}(v_m) < \infty$ for every $m \in \mathbb Z.$ It follows from Proposition~\ref{generation-invariance} that for every $m \in \mathbb Z,$ 
$\alpha_{\lambdab}(w) < \infty$ for every $w \in \mathcal G_{v_m}.$ An application of Lemma~\ref{preparatory}(ii) now shows that $\alpha_{\lambdab}(w) < \infty$ for every $w \in V.$ Thus the claim stands verified. 

By Theorem~\ref{hyper-range-tree}, \eqref{gm-def} and \eqref{one-all}, 
\begin{itemize}
\item $\alpha_{\lambdab}(v)= \infty$ for all $v \in V$ if and only if  $S_{\lambdab}$ is analytic. 
\item $\alpha_{\lambdab}(v) < \infty$ for all $v \in V$ if and only if $S_{\lambdab}^\infty(\ell^2(V))$ is of infinite dimension. 
\end{itemize}
Finally, note that if (i) does not hold, then 
by \eqref{one-all}, (ii) holds. 
\end{proof}

\section{Proof of Theorem~\ref{Wold-type-thm}} \label{S4}
The next step in the proof of Theorem~\ref{Wold-type-thm} provides a Wold-type decomposition theorem for left-invertible, non-analytic, balanced weighted shifts on a rootless directed tree (cf. \cite[Theorem~1.1]{SF}).  
\begin{theorem}[Wold-type decomposition] \label{balanced-reducing}
Let $S_{\lambdab}$ be a left-invertible, non-analytic weighted shift on a rootless directed tree $\mathcal T=(V, \mathcal E)$. Then $S_{\lambdab}^\infty(\ell^2(V))$ reduces $S_{\lambdab}$ if and only if $S_{\lambdab}$ is balanced. Moreover, if $S_{\lambdab}$ is balanced, then 
$$\ell^2(V) = 
S_{\lambdab}^\infty(\ell^2(V)) \oplus \ker S_{\lambdab}^* \oplus S_{\lambdab}(\ker S_{\lambdab}^*) \oplus \cdots .$$
\end{theorem}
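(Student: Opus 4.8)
The plan is to exploit the explicit description of the hyper-range furnished by Theorem~\ref{hyper-range-tree}. Fix a bilateral path $\mf v = \{v_m\}_{m \in \mathbb Z}$; since $S_{\lambdab}$ is non-analytic, Corollary~\ref{dicho-coro} gives $\alpha_{\lambdab}(v_m) < \infty$ for every $m$, so every $g_{m, \mf v}$ is a nonzero vector supported exactly on the generation $\mathscr G_{v_m}$, and $\{g_{m,\mf v}\}_{m\in\mathbb Z}$ is an orthogonal basis of $S_{\lambdab}^\infty(\ell^2(V))$. To decide when $S_{\lambdab}^\infty(\ell^2(V))$ reduces $S_{\lambdab}$, one checks invariance under $S_{\lambdab}^*$: by \eqref{action-hr} the subspace is already $S_{\lambdab}$-invariant, so reducibility is equivalent to $S_{\lambdab}^* g_{m,\mf v} \in S_{\lambdab}^\infty(\ell^2(V))$ for all $m$. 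Computing $S_{\lambdab}^* g_{m,\mf v}$ via \eqref{known-rmk-new} (the adjoint acts by $S_{\lambdab}^* e_w = \lambda_w e_{\parent w}$), one finds $S_{\lambdab}^* g_{m,\mf v}$ is supported on $\mathscr G_{v_{m-1}}$; it lies in $\mathrm{span}\{g_{m-1,\mf v}\}$ precisely when the coefficient function it produces is a scalar multiple of the coefficient function of $g_{m-1,\mf v}$ on each set $A(v_{m-1},n)$. Carrying the bookkeeping through — using parts (iii), (iv) of Lemma~\ref{preparatory} to match up the sets $A(v_m,n)$ with $A(v_{m-1},n-1)$ under $\parent{\cdot}$ — this proportionality condition reduces to the requirement that $\|S_{\lambdab} e_u\|$ be constant as $u$ ranges over a generation, i.e.\ that $S_{\lambdab}$ be balanced. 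The implication in the reverse direction follows by running the same computation forward: if $S_{\lambdab}$ is balanced, the relevant norms factor out and one obtains $S_{\lambdab}^* g_{m,\mf v} = c_m\, g_{m-1,\mf v}$ for an explicit scalar $c_m > 0$.

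For the ``moreover'' part, assume $S_{\lambdab}$ is balanced, so $\mathcal R := S_{\lambdab}^\infty(\ell^2(V))$ reduces $S_{\lambdab}$. Write $\ell^2(V) = \mathcal R \oplus \mathcal R^\perp$. On $\mathcal R$, \eqref{action-hr} together with the balanced hypothesis shows $S_{\lambdab}|_{\mathcal R}$ is (unitarily equivalent to) a bilateral weighted shift with weights bounded away from $0$ and $\infty$, hence invertible; in particular $S_{\lambdab}|_{\mathcal R}$ has dense range, so $\mathcal R$ contributes nothing to $\ker S_{\lambdab}^*$ and the wandering-subspace tower sits inside $\mathcal R^\perp$. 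It then remains to prove that the restriction $T := S_{\lambdab}|_{\mathcal R^\perp}$, which is left-invertible and analytic (its hyper-range is $\mathcal R^\perp \cap \mathcal R = \{0\}$), has the wandering subspace property, so that $\mathcal R^\perp = [\ker T^*]_T = \ker S_{\lambdab}^* \oplus S_{\lambdab}(\ker S_{\lambdab}^*) \oplus \cdots$. Here one uses that an analytic left-invertible operator automatically has the wandering subspace property — this is Shimorin's theorem, and in the weighted-shift-on-a-tree setting it can also be read off from Lemma~\ref{hyper-range} / the concrete orthogonal decomposition $\ell^2(V) = \bigoplus_{n\ge 0} S_{\lambdab}^n(\ker S_{\lambdab}^*) \oplus S_{\lambdab}^\infty(\ell^2(V))$; intersecting with $\mathcal R^\perp$ and noting $\ker S_{\lambdab}^* \subseteq \mathcal R^\perp$ (as $S_{\lambdab}|_{\mathcal R}$ is onto) gives the claimed orthogonal sum.

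I expect the main obstacle to be the first step: pinning down the exact algebraic condition under which $S_{\lambdab}^* g_{m,\mf v}$ is proportional to $g_{m-1,\mf v}$, and showing cleanly that this is equivalent to the balanced condition rather than to something formally weaker. The subtlety is that $S_{\lambdab}^* g_{m,\mf v}$ could a priori be a combination of several $g_{k,\mf v}$'s, or could fail to lie in $\mathcal R$ at all; ruling this out requires carefully tracking how the ratios $\lambdab^{(n)}(u)/\lambdab^{(n)}(v_m)$ transform under $\parent{\cdot}$ across the decomposition $\mathscr G_{v_m} = \bigsqcup_n A(v_m,n)$, and then recognizing the obstruction as exactly the generation-wise variation of $\|S_{\lambdab} e_u\|^2 = \sum_{w \in \child u}\lambda_w^2$. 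Once that identity is in hand, everything else (the explicit four-term decomposition, the unitarity of the analytic part's complement being handled separately in the paper's remark) is routine.
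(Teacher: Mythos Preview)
Your plan for the equivalence ``$S_{\lambdab}^\infty(\ell^2(V))$ reduces $S_{\lambdab}$ iff $S_{\lambdab}$ is balanced'' is correct and matches the paper: compute $S_{\lambdab}^* g_{m,\mf v}$, observe it is supported on $\mathscr G_{v_{m-1}}$, and compare coefficients with $g_{m-1,\mf v}$; the obstruction is exactly the generation-wise variation of $\|S_{\lambdab} e_w\|^2$. The paper packages the computation of $S_{\lambdab}^* g_{m,\mf v}$ into a separate lemma (Lemma~\ref{computational}) but the idea is identical.

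The ``moreover'' part, however, has a real gap. Your claim that ``an analytic left-invertible operator automatically has the wandering subspace property --- this is Shimorin's theorem'' is false: Shimorin proved this only under additional hypotheses (e.g.\ concavity), and indeed the entire point of Section~\ref{S5} of this paper is to exhibit analytic left-invertible weighted shifts on rootless trees \emph{without} the wandering subspace property. Likewise, the orthogonal decomposition $\ell^2(V) = \bigoplus_{n\ge 0} S_{\lambdab}^n(\ker S_{\lambdab}^*) \oplus S_{\lambdab}^\infty(\ell^2(V))$ cannot be ``read off from Lemma~\ref{hyper-range}'': that lemma describes the ranges $S_{\lambdab}^n(\ell^2(V))$, not their orthogonal complements in one another, and for a general left-invertible $T$ the subspaces $T^n(\ker T^*)$ are \emph{not} mutually orthogonal. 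What the paper actually does is prove a dedicated lemma (Lemma~\ref{balanced-Wold}) that uses the balanced hypothesis in an essential way: since $\|S_{\lambdab}^m e_w\|$ is constant over $w \in \child v$ (part~(i)), the operator $S_{\lambdab}^{*m}S_{\lambdab}^m$ acts as a scalar on each piece of $\ker S_{\lambdab}^*$, which yields both the mutual orthogonality of the $S_{\lambdab}^n(\ker S_{\lambdab}^*)$ (part~(ii)) and the identification $S_{\lambdab}^n(\ker S_{\lambdab}^*) = S_{\lambdab}^n(\ell^2(V)) \ominus S_{\lambdab}^{n+1}(\ell^2(V))$ (part~(iii)). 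The decomposition then follows by a telescoping argument (part~(iv) and Lemma~\ref{general-fact}). You need to supply this balanced-specific argument; the abstract reduction to ``analytic part has wandering subspace property'' does not work.
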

We begin with a computational lemma.
\begin{lemma} \label{computational} Assume that for some $m \in \mathbb Z,$ $h_{m, n} \in \ell^2(V),$ where 
\beqn
h_{m, n} := \begin{cases} \displaystyle \sum_{u \in \child{v_{m-1}}} \lambdab(u) e_{u} & ~\mbox{if~}n =1, \\
\displaystyle \sum_{u \in A(v_m,n)} \lambdab^{(n)}(u) e_{u} & ~\mbox{if~}n \Ge 2, 
\end{cases}
\eeqn
Then, for every positive integer $n,$ \beqn
S^*_{\lambdab}(h_{m, n}) =
\displaystyle \sum_{w \in A(v_{m-1}, n-1)}  \lambdab^{(n-1)}(w)\, \|S_{\lambdab}e_w\|^2\,e_w.
\eeqn
\end{lemma}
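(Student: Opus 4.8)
The plan is to compute $S^*_{\lambdab}(h_{m,n})$ directly by expanding $h_{m,n}$ in the orthonormal basis $\{e_u\}_{u \in V}$ and applying the known action of $S^*_{\lambdab}$ on basis vectors. Recall that for a weighted shift on a directed tree, $S^*_{\lambdab} e_u = \lambda_u e_{\parent u}$ (this follows from the definition of $\varLambda_{\mathcal T}$ and adjoint computation, and is implicit in the material of \cite{JJS2012} cited in the excerpt). Thus, for $n \Ge 2$,
\[
S^*_{\lambdab}(h_{m,n}) = \sum_{u \in A(v_m,n)} \lambdab^{(n)}(u)\, \lambda_u\, e_{\parent u},
\]
and the analogous formula holds for $n=1$ with $A(v_m,n)$ replaced by $\child{v_{m-1}}$ (note $\child{v_{m-1}} = A(v_m,1) \cup \{v_m\}$, but the definition of $h_{m,1}$ already sums over all of $\child{v_{m-1}}$).

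First I would rewrite the right-hand side by grouping the sum over $u$ according to the common parent $w = \parent u$. By Lemma~\ref{preparatory}(iv), $\parent{A(v_m,n)} = A(v_{m-1},n-1)$ (using $\parent{v_m} = v_{m-1}$), so as $u$ ranges over $A(v_m,n)$ the parent $w$ ranges over $A(v_{m-1},n-1)$; for each such $w$, the fiber is $\{u : \parent u = w\} = \child w$, and moreover each $u \in \child w$ indeed lies in $A(v_m,n)$ — this is exactly the content of Lemma~\ref{preparatory}(iii), $\child{A(v_{m-1},n-1)} = A(v_m,n)$ for $n-1 \Ge 1$, i.e. $n \Ge 2$; for $n=1$ one uses instead the fact that $\child{v_{m-1}}$ is the full fiber over the single vertex $w = v_{m-1} = \parentn{0}{v_{m-1}} \in A(v_{m-1},0) = \{v_{m-1}\}$. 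So in both cases the double sum reorganizes cleanly as a sum over $w \in A(v_{m-1},n-1)$ of a sum over $u \in \child w$.

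Next I would simplify the coefficient. For $u \in \child w$, the cocycle identity \eqref{lambda-k} gives $\lambdab^{(n)}(u) = \lambdab(u)\,\lambdab^{(n-1)}(w)$ (since $\parent u = w$ and $\parentn{j}{u} = \parentn{j-1}{w}$). Hence
\[
\lambdab^{(n)}(u)\,\lambda_u = \lambda_u^2\, \lambdab^{(n-1)}(w) = \big(\lambdab^{(1)}(u)\big)^2 \lambdab^{(n-1)}(w),
\]
and summing over $u \in \child w$ produces
\[
\sum_{u \in \child w} \lambdab^{(n)}(u)\,\lambda_u\, e_w = \lambdab^{(n-1)}(w) \Big(\sum_{u \in \child w} \lambda_u^2\Big) e_w = \lambdab^{(n-1)}(w)\,\|S_{\lambdab}e_w\|^2\, e_w,
\]
where the last equality is \eqref{known-rmk-new-new} with $n=1$. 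Assembling over all $w \in A(v_{m-1},n-1)$ yields the claimed formula. The treatment of $n=1$ is a degenerate special case of the same argument, with $A(v_{m-1},0) = \{v_{m-1}\}$ a singleton and $h_{m,1}$ supported on all of $\child{v_{m-1}}$.

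I do not expect a genuine obstacle here — the only point requiring care is the bookkeeping in the parent/child reindexing for the edge case $n=1$, where $h_{m,1}$ is defined as a sum over $\child{v_{m-1}}$ rather than over $A(v_m,1)$, and one must check that this still reorganizes to a sum over the singleton $A(v_{m-1},0) = \{v_{m-1}\}$ with the full fiber $\child{v_{m-1}}$ above it; once that is noted, the coefficient computation is identical to the general case. The hypothesis $h_{m,n} \in \ell^2(V)$ is used only to ensure that $h_{m,n}$ lies in $\mathscr D(S_{\lambdab}^*)$ so that the termwise application of $S^*_{\lambdab}$ is legitimate; boundedness of $S_{\lambdab}$ already guarantees $\mathscr D(S_{\lambdab}^*) = \ell^2(V)$, so this is automatic.
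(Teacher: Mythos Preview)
Your proposal is correct and follows essentially the same route as the paper's proof: apply $S^*_{\lambdab}e_u = \lambda_u e_{\parent u}$ termwise, regroup the sum over $A(v_m,n)$ by common parent $w \in A(v_{m-1},n-1)$ via Lemma~\ref{preparatory}(iii)--(iv), factor $\lambdab^{(n)}(u) = \lambda_u\,\lambdab^{(n-1)}(w)$, and recognize $\sum_{u \in \child w}\lambda_u^2 = \|S_{\lambdab}e_w\|^2$. The paper treats $n=1$ and $n \Ge 2$ separately in exactly the same way you outline, so there is no substantive difference.
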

\begin{proof} 
If $n=1,$ then
\beqn
S^*_{\lambdab}(h_{m, n})
= \sum_{u \in \child{v_{m-1}}} \lambda^2_u  e_{\parent u} = \|S_{\lambdab}e_{v_{m-1}}\|^2 e_{v_{m-1}}.
\eeqn
Assume that $n \Ge 2.$ By Lemma~\ref{preparatory}(iv), 
\beqn
S^*_{\lambdab}(h_{m, n}) &=& \sum_{u \in A(v_m,n)} \lambdab^{(n)}(u)\, \lambda_u \,e_{\parent u} \\
&=& \sum_{u \in A(v_m,n)} \lambdab^{(n-1)}(\parent u)\, \lambda^2_u \,e_{\parent u} \\
&=&  \sum_{w \in A(v_{m-1}, n-1)} \sum_{u \in \child w} \lambdab^{(n-1)}(w) \, \lambda_u^2\,e_w \\
&\overset{\eqref{known-rmk-new-new}}=& \sum_{w \in A(v_{m-1}, n-1)}  \lambdab^{(n-1)}(w)\, \|S_{\lambdab}e_w\|^2\,e_w.
\eeqn
This completes the proof. 
\end{proof}

We also need the following elementary fact.
\begin{lemma} \label{general-fact}
If $\mathcal K, \mathcal L$ are closed subspaces of a Hilbert space $\mathcal H$ such that $\mathcal K \subseteq \mathcal L,$ then 
$\mathcal H \ominus \mathcal K= \big(\mathcal H \ominus \mathcal L\big) \oplus \big(\mathcal L \ominus \mathcal K\big).$
\end{lemma}
\begin{proof} Let $h \in \mathcal H \ominus \mathcal K.$ Decompose $h= x + y,$ where $x \in \mathcal H \ominus \mathcal L$ and $y \in \mathcal L.$ Since $h$ is orthogonal to $\mathcal K,$ for any $k \in \mathcal K,$ $\inp{x+y}{k}=0.$ However, since $x$ is orthogonal to $\mathcal L,$ $\inp{y}{k}=0$ for every $k \in \mathcal K.$ This shows that $y \in \mathcal L \ominus \mathcal K,$ and hence
\beqn \mathcal H \ominus \mathcal K \subseteq \big(\mathcal H \ominus \mathcal L\big) \oplus \big(\mathcal L \ominus \mathcal K\big).
\eeqn
Since $\mathcal H \ominus \mathcal L \subseteq \mathcal H \ominus \mathcal K$ and $\mathcal L \ominus \mathcal K \subseteq \mathcal H \ominus \mathcal K,$ we obtain the other inclusion. 
\end{proof}

Here is the third lemma needed in the proof of Theorem~\ref{balanced-reducing} (cf. \cite[Theorem 6.4]{BDPP2019}).
\begin{lemma} \label{balanced-Wold}
Let $S_{\lambda}$ be a left-invertible balanced weighted shift on a rootless directed tree $\mathcal T=(V, \mathcal E)$. Then the following statements are true:
\begin{itemize}
\item[(i)] $\|S_{\lambdab}^n e_v\| = \|S_{\lambdab}^n e_u\|$ for every $n \in \mathbb N$ and for all vertices $u$, $v$ lying in the same generation,
\item[(ii)] $S_{\lambdab}^n(\ker S_{\lambdab}^*)$ and $S_{\lambdab}^m(\ker S_{\lambdab}^*)$ are orthogonal for all distinct $m,n \in \mathbb N$,
\item[(iii)] $S_{\lambdab}^n(\ker S_{\lambdab}^*) = S_{\lambdab}^n(\ell^2(V)) \ominus S_{\lambdab}^{n+1}(\ell^2(V)),$ $n \in \mathbb N$,
\item[(iv)] $\oplus_{k=0}^{n-1} S_{\lambdab}^k(\ker S_{\lambdab}^*) =  \ell^2(V) \ominus S_{\lambdab}^{n}(\ell^2(V)),$ $n \Ge 1$.
\end{itemize}
\end{lemma}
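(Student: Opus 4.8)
The plan is to prove the four statements of Lemma~\ref{balanced-Wold} in the order listed, using the structural results already established, and the most delicate part will be the norm identity (i), from which the orthogonality and range decompositions follow.

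For (i), the plan is to induct on $n$. The case $n=1$ is the definition of ``balanced''. For the inductive step, fix $u, v$ in the same generation. Using Lemma~\ref{preparatory} we may decompose $\child{u}$ and $\child{v}$ over generations, but the cleaner route is to use \eqref{known-rmk-new-new-new}, namely $S_\lambdab^{*n}S_\lambdab^n e_v = \|S_\lambdab^n e_v\|^2 e_v$, together with the recursion $\|S_\lambdab^{n+1}e_v\|^2 = \sum_{w \in \child v}\lambda_w^2\,\|S_\lambdab^n e_w\|^2$ (which comes from \eqref{known-rmk-new} applied once and then \eqref{known-rmk-new-new}). Since $u$ and $v$ lie in the same generation, $\child u$ and $\child v$ lie in the same generation, and moreover $\|S_\lambdab e_v\| = \|S_\lambdab e_u\|$ forces, via the balanced hypothesis applied generation by generation, the weights on $\child u$ and on $\child v$ to have matching $\ell^2$-profiles; combined with the inductive hypothesis $\|S_\lambdab^n e_w\|$ depending only on the generation of $w$, the recursion yields $\|S_\lambdab^{n+1}e_v\| = \|S_\lambdab^{n+1}e_u\|$. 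The subtle point is that one genuinely needs the full balanced condition (equal norms throughout each generation), not merely for $u$ and $v$, because $\child u$ may split into several generations; I expect this bookkeeping to be the main obstacle in part (i).

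For (ii), the plan is to observe that by Lemma~\ref{hyper-range}, an element of $S_\lambdab^n(\ker S_\lambdab^*)$ is supported on $\bigsqcup_{v}\childn{n}{v}$ with its restriction to each $\childn{n}{v}$ proportional to $\lambdab^{(n)}$, and orthogonality of $S_\lambdab^n(\ker S_\lambdab^*)$ and $S_\lambdab^m(\ker S_\lambdab^*)$ for $m \neq n$ reduces to a computation on each such finite block; alternatively, and more in the spirit of the paper, use $S_\lambdab^n(\ker S_\lambdab^*) \perp S_\lambdab^{n+k}(\ell^2(V))$ for $k \ge 1$, which is immediate from $\ker S_\lambdab^* \perp S_\lambdab^k(\ell^2(V))$ and the fact that $S_\lambdab^{*n}$ maps $S_\lambdab^{n+k}(\ell^2(V))$ into $S_\lambdab^k(\ell^2(V))$. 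Since $S_\lambdab^m(\ker S_\lambdab^*) \subseteq S_\lambdab^{n+1}(\ell^2(V))$ whenever $m \ge n+1$, this gives (ii) for $m > n$, and by symmetry for all distinct $m, n$. Note the balanced hypothesis is actually used here only insofar as it will be used to get equalities of norms ensuring the relevant ranges behave well; the core orthogonality $\ker S_\lambdab^* \perp S_\lambdab(\ell^2(V))$ is automatic for any weighted shift.

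For (iii), the plan is to use Lemma~\ref{general-fact} with $\mathcal K = S_\lambdab^{n+1}(\ell^2(V)) \subseteq \mathcal L = S_\lambdab^n(\ell^2(V)) \subseteq \mathcal H = \ell^2(V)$, so it suffices to identify $S_\lambdab^n(\ell^2(V)) \ominus S_\lambdab^{n+1}(\ell^2(V))$ with $S_\lambdab^n(\ker S_\lambdab^*)$. Since $S_\lambdab$ is left-invertible, $S_\lambdab^n$ is bounded below, hence $S_\lambdab^n$ restricts to a closed-range operator and one has $S_\lambdab^n(\mathcal L_0 \ominus \mathcal L_1) $ need not equal $S_\lambdab^n\mathcal L_0 \ominus S_\lambdab^n\mathcal L_1$ for a general operator, but here I would argue directly: $\ell^2(V) = \ker S_\lambdab^* \oplus S_\lambdab(\ell^2(V))$, apply $S_\lambdab^n$ to get $S_\lambdab^n(\ell^2(V)) = S_\lambdab^n(\ker S_\lambdab^*) + S_\lambdab^{n+1}(\ell^2(V))$, and this sum is orthogonal by (ii) (the case $m = n+1$ versus $n$, i.e. $S_\lambdab^n(\ker S_\lambdab^*) \perp S_\lambdab^{n+1}(\ell^2(V))$, which is exactly what was shown en route to (ii)). That gives (iii). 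Finally (iv) follows by a telescoping induction: for $n \ge 1$, $\ell^2(V) \ominus S_\lambdab^n(\ell^2(V)) = \bigl(\ell^2(V) \ominus S_\lambdab^{n-1}(\ell^2(V))\bigr) \oplus \bigl(S_\lambdab^{n-1}(\ell^2(V)) \ominus S_\lambdab^n(\ell^2(V))\bigr)$ by Lemma~\ref{general-fact}, and the last summand is $S_\lambdab^{n-1}(\ker S_\lambdab^*)$ by (iii); feeding in the inductive hypothesis for $n-1$ yields $\bigoplus_{k=0}^{n-1} S_\lambdab^k(\ker S_\lambdab^*)$, with the base case $n = 1$ being (iii) with $n = 0$.
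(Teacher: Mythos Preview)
Your proposal is correct and tracks the paper's argument closely; part (iv) is identical, and (ii)--(iii) differ only in packaging (you push ranges around via $S_\lambdab^{*n}$, while the paper computes $\langle S_\lambdab^m f, S_\lambdab^n g\rangle$ directly using $S_\lambdab^{*m}S_\lambdab^m f = c(m,v)\,f$ for $f$ supported in $\child v$, which follows from \eqref{known-rmk-new-new-new} and (i)). Two small clarifications. First, your worry in (i) that ``$\child u$ may split into several generations'' is misplaced: every element of $\child u$ has parent $u$, so $\child u$ lies in a single generation, and the induction is clean via $\|S_\lambdab^{n+1}e_v\|^2 = \big(\text{constant value of }\|S_\lambdab^n e_w\|^2\text{ over }w\in\child v\big)\cdot\|S_\lambdab e_v\|^2$. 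Second, in (ii) the claim that $S_\lambdab^{*n}$ maps $S_\lambdab^{n+k}(\ell^2(V))$ into $S_\lambdab^k(\ell^2(V))$ is \emph{precisely} where the balanced hypothesis enters (via (i)) and is not automatic; it is equivalent to the paper's scalar identity above, so you should flag (i) explicitly at that step rather than calling it ``immediate.''
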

\begin{proof} 
Let $c(m,v)$ denote the constant value of $\|S_{\lambdab}^m e_w\|^2,$ $w \in \child v$.

(i) This can be obtained by imitating the argument of \cite[Lemma 6.3]{BDPP2019} (indeed, the proof of \cite[Lemma 6.3]{BDPP2019} therein relies only on \cite[Proposition 3.1.3]{JJS2012}, which is valid for a bounded weighted shift on arbitrary directed tree).

(ii) 
Let $f \in \ker S_{\lambdab}^*$. In view of \cite[Proposition~3.5.1(ii)]{JJS2012}, we may assume that $f = \sum_{w \in \child v} f(w) e_w$ for some $v \in V.$ For $m, n \in \mathbb N$ such that $m < n$ and $g \in \ell^2(V)$,
\beq
\label{ortho-step}
\inp{S_{\lambdab}^m(f)}{S_{\lambdab}^{n}(g)} &=& \inp{S_{\lambdab}^{*m}S_{\lambdab}^m(f)}{S^{n-m}_{\lambdab}(g)} \notag \\ \notag
&\overset{\eqref{known-rmk-new-new-new}}=&\left\langle \sum_{w \in \child v} \|S_{\lambdab}^m e_w\|^2 f(w) e_w, ~S^{n-m}_{\lambdab}(g)\right\rangle \\
&\overset{\mbox{(i)}}=& c(m,v) \inp{f}{S^{n-m}_{\lambdab}(g)} = 0.
\eeq
This yields (ii). 

(iiii) Letting $n=m+1$ in \eqref{ortho-step}, we obtain 
\beq\label{inclusion-1}
S_{\lambdab}^m(\ker S_{\lambdab}^*) \subseteq S_{\lambdab}^m(\ell^2(V)) \ominus S_{\lambdab}^{m+1}(\ell^2(V)).
\eeq
To see the reverse inclusion, let $f \in S_{\lambdab}^m(\ell^2(V)) \ominus S_{\lambdab}^{m+1}(\ell^2(V))$. Then $f = S_{\lambdab}^m (g)$ for some $g \in \ell^2(V)$ and $\inp{S_{\lambdab}^m (g)}{S_{\lambdab}^{m+1}(e_v)} = 0$ for all $v \in V$. However, since 
\beqn
\inp{S_{\lambdab}^m (g)}{S_{\lambdab}^{m+1} e_v} = \left\langle g,\ S_{\lambdab}^{*m}S_{\lambdab}^m\sum_{w \in \child v} \lambda_w e_w\right\rangle 
\overset{\eqref{known-rmk-new-new-new}}= c(m,v) \inp{g}{S_{\lambdab} e_v},
\eeqn
$\inp{g}{S_{\lambdab} e_v} = 0$ for every $v \in V,$ that is, $g \in \ker S_{\lambdab}^*$. Consequently, $f \in S_{\lambdab}^m(\ker S_{\lambdab}^*)$. This together with \eqref{inclusion-1} yields (iii).

(iv) We verify this by mathematical induction on $n \Ge 1.$ Clearly, (iv) is obvious for $n=1.$ Assume that (iv) holds for $n \Ge 1.$ Thus 
\beqn
\oplus_{k=0}^{n} S_{\lambdab}^k(\ker S_{\lambdab}^*) 
&=& \Big(\ell^2(V) \ominus S_{\lambdab}^{n}(\ell^2(V))\Big) \oplus S_{\lambdab}^{n}(\ker S_{\lambdab}^*) \notag
\\
&\overset{(\mathrm{iii})}=& \Big(\ell^2(V) \ominus S_{\lambdab}^{n}(\ell^2(V))\Big) \oplus \Big(S_{\lambdab}^{n}(\ell^2(V)) \ominus S_{\lambdab}^{n+1}(\ell^2(V))\Big) 
\notag\\
& = & \ell^2(V) \ominus S_{\lambdab}^{n+1}(\ell^2(V)),
\eeqn
where we used Lemma~\ref{general-fact} in the last step. This completes the verification of (iv).
\end{proof}

\begin{proof}[Proof of Theorem~\ref{balanced-reducing}]
For a bilateral path ${\mf v}=\{v_m\}_{m \in \mathbb Z}$ in $\mathcal T,$ let $g_{m, \mf v} \in \ell^2(V),$ $m \in \mathbb Z$, be given by 
\eqref{gm-def}. Fix $m \in \mathbb Z.$ 
Since
$A(v_m, 0) \sqcup A(v_m, 1)= \child{v_{m-1}},$ 
by Lemma~\ref{computational},
\beq \label{bbws-new}
S_{\lambdab}^* g_{m, \mf v} =
\sum_{n = 1}^{\infty} \sum_{w \in A(v_{m-1}, n-1)}  \frac{\lambdab^{(n-1)}(w)}{\lambdab^{n}(v_m)}\, \|S_{\lambdab}e_w\|^2\,e_w.
\eeq 
Now, assume that $S_{\lambdab}^\infty(\ell^2(V))$ reduces $S_{\lambdab}.$ 
By Theorem~\ref{hyper-range-tree}, $S^*_{\lambdab}$ is the backward bilateral weighted shift given by
\beq 
\label{bbws}
S^*_{\lambdab}g_{m, \mf v}=\frac{\|g_{m, \mf v}\|^2}{\|g_{m-1, \mf v}\|^2}\lambda_{v_m}g_{m-1, \mf v}, \quad m \in \mathbb Z.
\eeq
This combined with \eqref{bbws-new} shows that for every $m \in \mathbb Z,$ 
\beqn
&& \sum_{n = 0}^{\infty} \sum_{w \in A(v_{m-1}, n)}  \frac{\lambdab^{(n)}(w)}{\lambdab^{(n+1)}(v_m)}\, \|S_{\lambdab}e_w\|^2\,e_w \\
&=& \frac{\|g_{m, \mf v}\|^2}{\|g_{m-1, \mf v}\|^2}\lambda^2_{v_m} \sum_{n = 0}^{\infty} \sum_{w \in A(v_{m-1},n)} \frac{\lambdab^{(n)}(w)}{\lambdab^{(n+1)}(v_{m})} e_w. 
\eeqn
Comparing the coefficients of $e_w$ on both sides, we obtain
\beqn
\|S_{\lambdab} e_{w}\|^2 = \frac{\|g_{m, \mf v}\|^2}{\|g_{m-1, \mf v}\|^2}\,\lambda^2_{v_m}, \quad w \in A(v_{m-1},n), ~n \Ge 0. 
\eeqn
It now follows from Lemma~\ref{preparatory}(i) that  $S_{\lambdab}$ is balanced. 
Conversely, if $S_{\lambdab}$ is balanced, then by \eqref{bbws-new}, $S^*_{\lambdab}g_{m, \mf v} = c_m g_{m-1, \mf v},$ $m \in \mathbb Z,$ where $c_m$ is the constant value of $\|S_{\lambdab}e_u\|,$ $u \in \mathscr G_{v_m}.$
It is now easy to see that \eqref{bbws} holds. This shows that $S_{\lambdab}^\infty(\ell^2(V))$ reduces $S_{\lambdab},$ which completes the proof of the first part.
 
To see the remaining part, note that by Lemma~\ref{balanced-Wold}(iii), for every $n \in \mathbb N,$   the closed subspace $S_{\lambdab}^\infty(\ell^2(V))$ of $S_{\lambdab}^{n+1}(\ell^2(V))$ is orthogonal to $S_{\lambdab}^n(\ker S_{\lambdab}^*).$ This together with Lemma~\ref{balanced-Wold}(ii) gives
\beqn
S_{\lambdab}^\infty(\ell^2(V)) \subseteq \ell^2(V) \ominus \Big(\oplus_{n\in\mathbb N} S_{\lambdab}^n(\ker S_{\lambdab}^*)\Big).
\eeqn
To see the reverse inclusion, note that 
if $f \in \ell^2(V) \ominus \Big(\oplus_{n\in\mathbb N} S_{\lambdab}^n(\ker S_{\lambdab}^*)\Big),$ then for every integer $n \Ge 1,$ $f \in \ell^2(V) \ominus \Big(\oplus_{k=0}^{n-1} S_{\lambdab}^k(\ker S_{\lambdab}^*)\Big)$, and consequently, by Lemma~\ref{balanced-Wold}(iv), $f \in S_{\lambdab}^{n}(\ell^2(V)),$ and hence $f \in S_{\lambdab}^{\infty}(\ell^2(V)).$ 
This completes the proof. 
\end{proof}

\begin{proof}[Proof of Theorem~\ref{Wold-type-thm}]
We divide the proof into two cases:
\vskip.1cm
{\it Case 1. $S_{\lambdab}$ is analytic.} 
\vskip.1cm
By Corollary~\ref{dicho-coro}, $\alpha_{\lambdab}(v)=\infty$ for every $v \in V.$ To see the desired equivalence, 
note that $S_{\lambdab}$ has Wold-type decomposition if and only if $S_{\lambdab}$ has the wandering subspace property. In view of \cite[Proposition~2.7]{S2001}, this happens if and only if $S'_{\lambdab}$ is analytic. It now follows from Corollary~\ref{dicho-coro} that $S_{\lambdab}$ has Wold-type decomposition if and only if  $\alpha_{\lambdab'}(v) = \infty$ for every $v \in V.$
\vskip.1cm
{\it Case 2. $S_{\lambdab}$ is non-analytic.}
\vskip.1cm
By Corollary~\ref{dicho-coro}, $\alpha_{\lambdab}(v) < \infty$ for every $v \in V.$ 
Also, by Theorem~\ref{hyper-range-tree}, $S_{\lambdab}\, g_{m, \mf v} = \lambda_{v_{m+1}} g_{m+1, \mf v},$ $m \in \mathbb Z,$ where $g_{m, \mf v}$ given by \eqref{gm-def} forms an orthogonal basis for $S_{\lambdab}^\infty(\ell^2(V)).$  

Suppose that $S_{\lambdab}$ has Wold-type decomposition.
Since $S_{\lambdab}|_{S_{\lambdab}^\infty(\ell^2(V))}$ is an isometry, $\lambda_{v_{m+1}} = \sqrt{\frac{\alpha_{\lambdab}(v_m)}{\alpha_{\lambdab}(v_{m+1})}}$ for every $m \in \mathbb Z.$ Since Theorem~\ref{hyper-range-tree} is applicable to any path $\{v_m\}_{m \in \mathbb Z}$ with arbitrary $v_0 \in V,$ 
\beqn 
\mbox{$S_{\lambdab}|_{S_{\lambdab}^\infty(\ell^2(V))}$ is an isometry}~\Longleftrightarrow~\mbox{
$\lambda_{v} =\sqrt{ \frac{\alpha_{\lambdab}(\parent{v})}{\alpha_{\lambdab}(v)}}$ for every $v \in V.$}
\eeqn
Further, by Theorem~\ref{balanced-reducing}, 
\beqn
\mbox{$S_{\lambdab}^\infty(\ell^2(V))$ reduces $S_{\lambdab}$}~\Longleftrightarrow~\mbox{$S_{\lambdab}$ is balanced.}
\eeqn 
This gives part (ii) of Theorem~\ref{Wold-type-thm}. 

Suppose now that $\lambda_{v} =\sqrt{\frac{\alpha_{\lambdab}(\parent{v})}{\alpha_{\lambdab}(v)}}$ for all $v$ in a bilateral path, and $S_{\lambdab}$ is balanced. By Theorem~\ref{balanced-reducing}, $S_{\lambdab}^\infty(\ell^2(V))$ reduces $S_{\lambdab}$ and 
$S_{\lambdab}|_{_{\ell^2(V) \ominus S_{\lambdab}^\infty(\ell^2(V))}}$ has wandering subspace property.
Also, by Theorem~\ref{hyper-range-tree} that $S_{\lambdab}|_{S_{\lambdab}^\infty(\ell^2(V))}$ is unitary. This completes the proof.   
\end{proof}

As a consequence, we obtain a variant of \cite[Proposition 5.11]{ACJS2019}.
\begin{corollary} Assume that $S_{\lambdab} \in \mathcal B(\ell^2(V))$ is a norm-increasing $m$-concave weighted shift on a rootless directed tree $\mathcal T = (V, \mathcal E).$ If the cardinality of a generation $\mathscr G_v$ is finite for some $v \in V$, then $S_{\lambdab}$ is an isometry.
\end{corollary}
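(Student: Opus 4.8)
The plan is to reduce to the case where $S_{\lambdab}$ is non-analytic and then to show that $S_{\lambdab}$ acts isometrically on its hyper-range, which, in view of the explicit description of the hyper-range furnished by Theorem~\ref{hyper-range-tree}, forces $\|S_{\lambdab}e_v\|=1$ for every $v\in V$. To begin, a norm-increasing operator is bounded below, hence left-invertible, so the results of Sections~\ref{S2}--\ref{S3} apply. If the cardinality of some generation $\mathscr G_v$ is finite, then $\alpha_{\lambdab}(v)<\infty$ by Remark~\ref{remark-alpha-finite}, and Corollary~\ref{dicho-coro} shows that $S_{\lambdab}$ is non-analytic and $\alpha_{\lambdab}(w)<\infty$ for every $w\in V$; in particular $\mathcal H_{\infty}:=S_{\lambdab}^{\infty}(\ell^2(V))\neq\{0\}$.

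The next step is to establish that $\|S_{\lambdab}f\|=\|f\|$ for every $f\in\mathcal H_{\infty}$. Since $S_{\lambdab}$ is injective, $\mathcal H_{\infty}$ is $S_{\lambdab}$-invariant with $S_{\lambdab}(\mathcal H_{\infty})=\mathcal H_{\infty}$, so $T_0:=S_{\lambdab}|_{\mathcal H_{\infty}}$ is invertible on $\mathcal H_{\infty}$, and, being the restriction of a norm-increasing operator, satisfies $\|T_0^{-1}\|\leqslant 1$. Fix $0\neq f\in\mathcal H_{\infty}$ and set $a_k:=\|T_0^{k}f\|^2$ for $k\in\mathbb Z$. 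The sequence $\{a_k\}_{k\in\mathbb Z}$ is non-decreasing (as $S_{\lambdab}$ is norm-increasing) and bounded below by $0$. Moreover, for every $k\in\mathbb Z$ the vector $T_0^{k}f$ lies in $\ell^2(V)$, and, since $S_{\lambdab}$ acts on $\mathcal H_{\infty}$ as $T_0$, one has $\langle\Delta_{m,S_{\lambdab}}T_0^{k}f,\,T_0^{k}f\rangle=(-1)^m\delta_k$, where $\delta_k:=\sum_{j=0}^{m}(-1)^{m-j}\binom{m}{j}a_{k+j}$ is the $m$-th forward difference of $\{a_k\}$; by $m$-concavity, $(-1)^m\langle\Delta_{m,S_{\lambdab}}T_0^{k}f,\,T_0^{k}f\rangle\leqslant 0$, hence $\delta_k\leqslant 0$ for every $k\in\mathbb Z$. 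I would then invoke the elementary lemma that a real bi-infinite sequence which is non-decreasing, bounded below, and has non-positive $m$-th forward differences is necessarily constant. (Sketch, by induction on $m$: monotonicity and boundedness below force $a_k$ to converge as $k\to-\infty$, whence all higher forward differences tend to $0$ as $k\to-\infty$; the $(m-1)$-st forward difference is non-increasing with limit $0$ at $-\infty$, hence everywhere $\leqslant 0$, so the hypotheses persist with $m$ replaced by $m-1$; the base case $m=1$ is immediate since the first forward difference is simultaneously $\geqslant 0$ and $\leqslant 0$.) Thus $a_k\equiv\|f\|^2$; in particular $\|S_{\lambdab}f\|=\|f\|$.

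Finally, I would convert this into a statement about the weights. From $\|S_{\lambdab}f\|=\|f\|$ on $\mathcal H_{\infty}$ and $S_{\lambdab}^{*}S_{\lambdab}-I\geqslant 0$ (norm-increasing), the positive operator $S_{\lambdab}^{*}S_{\lambdab}-I$ has vanishing quadratic form on every $f\in\mathcal H_{\infty}$ and therefore annihilates $\mathcal H_{\infty}$, that is, $\mathcal H_{\infty}\subseteq\ker(S_{\lambdab}^{*}S_{\lambdab}-I)$. Since $S_{\lambdab}^{*}S_{\lambdab}e_v=\|S_{\lambdab}e_v\|^2e_v$ for all $v\in V$ (see \eqref{known-rmk-new-new-new} with $n=1$), the operator $S_{\lambdab}^{*}S_{\lambdab}$ is diagonal in the orthonormal basis $\{e_v\}_{v\in V}$, so $\ker(S_{\lambdab}^{*}S_{\lambdab}-I)$ consists precisely of the functions in $\ell^2(V)$ supported on $\{v\in V:\|S_{\lambdab}e_v\|=1\}$. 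On the other hand, by Theorem~\ref{hyper-range-tree} and \eqref{gm-def}, for any bilateral path $\mf v=\{v_m\}_{m\in\mathbb Z}$ the vector $g_{m,\mf v}$ lies in $\mathcal H_{\infty}$ and, because $\alpha_{\lambdab}(v_m)<\infty$ and the weights $\lambda_v$ are positive, it is strictly positive on $\mathscr G_{v_m}=\bigsqcup_{n}A(v_m,n)$ (Lemma~\ref{preparatory}(i)) and zero off this set. Hence $\|S_{\lambdab}e_v\|=1$ for every $v\in\mathscr G_{v_m}$; since $v_0$ may be taken to be any prescribed vertex of $V$, every vertex lies in such a generation, so $\|S_{\lambdab}e_v\|=1$ for all $v\in V$, that is, $S_{\lambdab}^{*}S_{\lambdab}=I$ and $S_{\lambdab}$ is an isometry.

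The step I expect to be the main obstacle is setting up and exploiting the two-sided iteration on $\mathcal H_{\infty}$: this is where $m$-concavity is genuinely used (via the bi-infinite sequence lemma), and it relies on $\mathcal H_{\infty}$ being nontrivial and on $S_{\lambdab}$ restricting to an invertible, norm-increasing operator there---which is exactly what the finiteness hypothesis delivers through Remark~\ref{remark-alpha-finite} and Corollary~\ref{dicho-coro}. The remaining steps are bookkeeping based on Theorem~\ref{hyper-range-tree} and the diagonality of $S_{\lambdab}^{*}S_{\lambdab}$.
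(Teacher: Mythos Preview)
Your proof is correct and follows a genuinely different route from the paper's. The paper proceeds by first invoking \cite[Proposition~3.4]{S2001} (applied to the Cauchy dual) to conclude that $S_{\lambdab}^{\infty}(\ell^2(V))$ \emph{reduces} $S_{\lambdab}$, then applies Theorem~\ref{balanced-reducing} to deduce that $S_{\lambdab}$ is balanced, and finally passes to an auxiliary bilateral scalar shift $S_{\gammab}$ on $\mathbb Z$ with weights $\gamma_n=\|S_{\lambdab}e_{v_n}\|$ and again appeals to \cite[Proposition~3.4]{S2001} to see that this invertible norm-increasing $m$-concave shift is unitary. You bypass both external citations and the detour through Theorem~\ref{balanced-reducing}: instead of showing that the hyper-range reduces $S_{\lambdab}$, you only use that $T_0=S_{\lambdab}|_{\mathcal H_\infty}$ is invertible (which needs just invariance, not reduction) and prove directly via the bi-infinite sequence lemma that $T_0$ is isometric. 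Your concluding step---exploiting the positivity and diagonality of $S_{\lambdab}^*S_{\lambdab}-I$ together with the strict positivity of $g_{m,\mf v}$ on $\mathscr G_{v_m}$---is also different from the paper's, which instead varies the bilateral path to cover $V$ after having already pinned down $\|S_{\lambdab}e_{v_n}\|=1$ along one path. The paper's argument is shorter on the page because it outsources the analytic content to Shimorin's result (and picks up ``balanced'' as a byproduct), whereas your argument is fully self-contained within the paper's framework and makes transparent exactly where $m$-concavity enters: it is precisely your sequence lemma that plays the role of \cite[Proposition~3.4]{S2001} in this setting.
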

\begin{proof} Suppose that $\mbox{card}(\mathscr G_v) < \infty$ for some $v \in V.$  
By Remark~\ref{remark-alpha-finite}, $\alpha_{\lambdab}(v) < \infty.$ Hence, by Theorem~\ref{hyper-range-tree}, 
$S_{\lambdab}$ is non-analytic. By \cite[Proposition 3.4]{S2001} (applied to $T=S'_{\lambdab}$),  $S_{\lambdab}^\infty(\ell^2(V))$ reduces $S_{\lambdab}.$ 
It now follows from 
Theorem~\ref{balanced-reducing} that $S_{\lambdab}$ is balanced. Let $\{v_n\}_{n \in \mathbb Z}$ be a bilateral path and set $\gamma_n = \|S_{\lambdab}e_{v_n}\|,$ $n \in \mathbb Z.$ Consider weighted shift $S_{\gammab}$ on rootless directed tree $\mathbb Z$ (with standard directed tree structure). Using
\beqn
S_{\gammab}e_n = \gamma_n e_{n+1}, \quad n \in \mathbb Z,
\eeqn
it is easy to see that $S_{\gammab}$ is an invertible norm-increasing $m$-concave operator. It follows from \cite[Proposition 3.4]{S2001} that $S_{\gammab}$ is unitary, and hence $\|S_{\lambdab}e_{v_n}\|=1$ for every $n \in \mathbb Z.$ Since $\{v_n\}_{n \in \mathbb Z}$ is an arbitrary bilateral path, $\|S_{\lambdab}e_{v}\|=1$ for every $v \in V.$ This combined with \eqref{known-rmk-new-new-new} shows that $S_{\lambdab}$ is an isometry.
\end{proof}


\section{Examples} \label{S5}

The purpose of this section is to illustrate the utility of Theorem~\ref{Wold-type-thm} with the help of a family of weighted shifts on a particular kind of rootless directed trees introduced in \cite[Example 4.4]{J2003}. To see that, 
consider the rootless 
quasi-Brownian directed tree of valency $2$. This directed tree, denoted by $\mathcal T_{qb},$ is given by
\begin{align*}
 & V = \mathbb N \times \mathbb Z, & \\ 
 & \mathcal E = \big\{\big((0, m), (1, m)\big), \big((0, m), (0, m-1)\big) : m \in \mathbb Z\big\}& \\
 & \sqcup  \big\{\big((n, m), (n+1, m)\big) : n \Ge 1, m \in \mathbb Z\big\}.&
\end{align*} 
Geometrically, $\mathcal T_{qb}$ is
obtained by ``gluing” a copy of the directed tree $\mathbb N$ to each $m \in \mathbb Z$ (see Figure~\ref{Fig-gen}). 
Note that $\mathcal T_{qb}$ is a leafless directed tree, which is not of finite branching index. This follows from 
\allowdisplaybreaks
\beq
\label{edge-set}
\child{(n,m)} = \begin{cases}
\{(n,m-1), (1,m)\} & \mbox{ if } n=0,\\
\{(n+1,m)\} & \mbox{ if } n \geqslant 1. \end{cases}
\eeq
This, combined with an induction on $n \Ge 1,$ yields 
\beqn
\left. \arraycolsep=1pt\def\arraystretch{1.4}
 \begin{array}{cc}
 &  \childn{n}{(0,m)} = \{(j,m-n+j) : 0 \leqslant j \leqslant n\},  \\ 
 &  \childn{n}{(l,m)} = \{(l+n,m)\},  
 \end{array}
\right\}
m \in \mathbb Z, ~ n, l \geqslant 1.
\eeqn
It now follows from \eqref{known-rmk-new} that for every integer $k \Ge 1,$ 
\beq 
\label{orthogonality}
&& \mbox{the set $\{S^k_{\lambdab} e_{(n, m)} : n \in \mathbb N, m \in \mathbb Z\}$ is orthogonal in $\ell^2(V)$}.
\eeq
Let $v_m:=(0,m)$, $m \in \mathbb Z$. Then the generation of $v_m$ is given by 
$\mathscr G_{v_m} = \{(j,m+j) : j \in \mathbb N\}.$
Further, for all $n \geqslant 1$ and $m \in \mathbb Z$,
$$\childn{n}{\parentn{n}{v_m}} = \{(j,m-j) : 0 \leqslant j \leqslant n\}.$$
Thus, $A(v_m, n) = \{(n,m-n)\}$
$($see Figure \ref{Fig-gen}$).$
\begin{figure}[H]
\includegraphics[scale=.7]{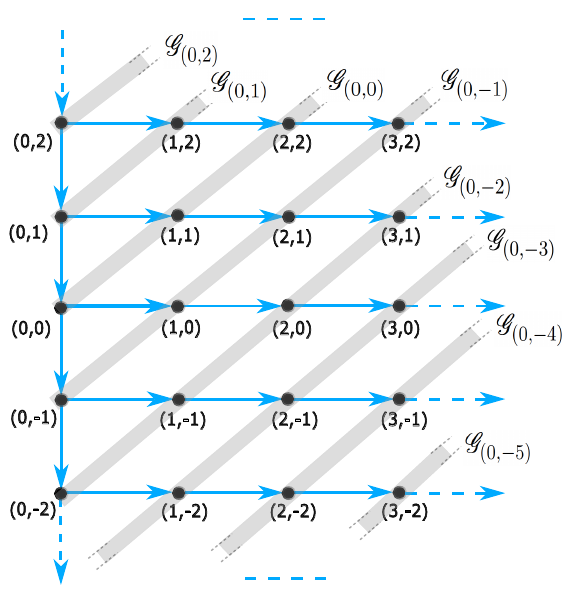}\caption{The generations of $\mathcal T_{qb}$}\label{Fig-gen}
\end{figure}

\begin{proposition} \label{prop-exam}
Let $\{p_m\}_{m \in \mathbb Z}$ be a bi-infinite sequence of quadratic polynomials  with positive coefficients such that  
\allowdisplaybreaks
\beq \label{constant-1}
& p_m(0)=1, ~m \in \mathbb Z, \\ \label{bdd-wt-shift}
& \displaystyle \sup_{m \in \mathbb Z} \big\{p'_m(0), p''_m(0)\big\} < \infty, \\ \label{inf-posi}
& \displaystyle \inf_{m \in \mathbb Z} p_m''(0) > 0.
\eeq
Consider the weight system $\lambdab = \big\{\lambda_{(n, m)} : (n, m) \in \mathbb N \times \mathbb Z\big\}$ given by 
\begin{align} \label{wt-system}
& \lambda_{(n,m)} := \begin{cases} \sqrt{\frac{p_m(n-1)}{p_m(n-2)}} & \mbox{ if } m \in \mathbb Z, ~n \geqslant 2, \\
                             \sqrt{\frac{m}{m+1}} & \mbox{ if } m \geqslant 1, ~n=0,\\
                             \frac{1}{\sqrt m} & \mbox{if} ~m \geqslant 1,~n=1,\\
                             1 & \mbox{if} ~m < 1,~n=0, 1. \end{cases}
\end{align}
Then $S_{\lambdab} \in \mathcal B(\ell^2(\mathbb N \times \mathbb Z))$ and $S_{\lambdab}$ is an analytic norm-increasing operator without Wold-type decomposition.  
\end{proposition}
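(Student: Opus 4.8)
The plan is to verify the three separate claims in order: boundedness of $S_{\lambdab}$, the norm-increasing property, and failure of Wold-type decomposition via analyticity of $S_{\lambdab}$ together with non-analyticity of its Cauchy dual $S'_{\lambdab}$.

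\textbf{Step 1: boundedness.} By \eqref{known-rmk-new-new} and \eqref{edge-set}, for $n\geqslant 1$ one has $\|S_{\lambdab}e_{(n,m)}\|^2=\lambda_{(n+1,m)}^2=\tfrac{p_m(n)}{p_m(n-1)}$, while $\|S_{\lambdab}e_{(0,m)}\|^2=\lambda_{(0,m-1)}^2+\lambda_{(1,m)}^2$. Using \eqref{constant-1} together with the Taylor expansion $p_m(n)=1+p_m'(0)n+\tfrac12 p_m''(0)n^2$ (valid since $p_m$ is quadratic and $p_m(0)=1$), the ratio $\tfrac{p_m(n)}{p_m(n-1)}$ is bounded uniformly in $n$ and $m$ precisely because of the uniform bounds \eqref{bdd-wt-shift} on $p_m'(0),p_m''(0)$; the finitely-many exceptional weights near $n\in\{0,1\}$ are clearly bounded. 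Since $\mathcal T_{qb}$ is of finite valency, \cite[Proposition~3.1.8]{JJS2012} gives $S_{\lambdab}\in\mathcal B(\ell^2(V))$.

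\textbf{Step 2: norm-increasing and $m$-concavity structure.} I would observe that by \eqref{orthogonality}, $S_{\lambdab}$ decomposes along the orthogonal ``threads'': the subspace spanned by $\{e_{(n,m)}:n\geqslant 0\}$ for fixed $m$ (adjoined suitably along the spine) behaves like a classical unilateral weighted shift whose weight-square sequence is governed by $p_m$. Concretely, the point is that $\|S_{\lambdab}^k e_{(n,m)}\|^2=\tfrac{p_m(n+k-1)}{p_m(n-1)}$ for $n\geqslant 1$ (and the analogous expression anchored at the spine), so the moment sequence $k\mapsto \|S_{\lambdab}^k e_v\|^2$ is, up to normalization, the sequence $k\mapsto p_m(k+c)$, a quadratic polynomial in $k$ with positive leading coefficient $\tfrac12 p_m''(0)>0$ by \eqref{inf-posi}. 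A weighted shift is $3$-concave iff its moment sequence is a polynomial of degree $\leqslant 2$, and norm-increasing iff that polynomial is nondecreasing with value $\geqslant 1$ at $k=0$; the conditions $p_m(0)=1$, positive coefficients, and $p_m''(0)>0$ deliver exactly this. (In fact the claimed ``$3$-expansion'' is the assertion $\Delta_{3,S_{\lambdab}}\leqslant 0$, which follows from $\Delta_{3,S_{\lambdab}}$ acting diagonally with entries $-\big(\text{second difference of }p_m\big)=-p_m''(0)<0$ plus a boundary contribution.) I would carry out this computation using \eqref{known-rmk-new-new-new} to reduce $\Delta_{m,S_{\lambdab}}$ to a diagonal operator and then invoke the finite-difference calculus for quadratic polynomials.

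\textbf{Step 3: failure of Wold-type decomposition.} Here I would apply Theorem~\ref{Wold-type-thm}. First, analyticity: by Corollary~\ref{dicho-coro} it suffices to show $\alpha_{\lambdab}(v)=\infty$ for some $v$. Since $A(v_m,n)=\{(n,m-n)\}$ is a singleton, $\alpha_{\lambdab}(v_m)=\sum_{n\geqslant 0}\big(\lambdab^{(n)}(n,m-n)/\lambdab^{(n)}(v_m)\big)^2$, and one computes $\lambdab^{(n)}(n,m-n)^2=\prod_{j=1}^{n}\lambda_{(j,m-n)}^2$, which by the telescoping form of \eqref{wt-system} equals $p_{m-n}(n-1)/p_{m-n}(-1)$ up to the boundary weights, hence grows quadratically in $n$ rather than decaying — so the series diverges and $S_{\lambdab}$ is analytic. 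By Theorem~\ref{Wold-type-thm}, $S_{\lambdab}$ then has Wold-type decomposition iff it has the wandering subspace property iff (Case 1 of that proof) $\alpha_{\lambdab'}(v)=\infty$ for all $v$, where $\lambdab'$ is given by \eqref{weights}. So the crux is to show $\alpha_{\lambdab'}(v_m)<\infty$. Using $\lambda'_{(n,m)}=\lambda_{(n,m)}/\|S_{\lambdab}e_{(n-1,m)}\|^2$ together with $\|S_{\lambdab}e_{(n-1,m)}\|^2=\tfrac{p_m(n-1)}{p_m(n-2)}$, the product $(\lambdab'^{(n)})^2$ telescopes to roughly $\big(p_{m-n}(-1)/p_{m-n}(n-1)\big)\cdot(\text{bounded factors})$, i.e.\ the moments of the Cauchy dual thread decay like $1/p_{m-n}(n-1)\sim 2/(p_{m-n}''(0)n^2)$; by \eqref{inf-posi} this is $\lesssim 1/n^2$ uniformly in $m$, so $\alpha_{\lambdab'}(v_m)<\infty$.

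\textbf{Main obstacle.} The genuinely delicate point is Step 3: the hyper-range and the Cauchy-dual moments at the spine vertices $v_m=(0,m)$ involve the ``gluing'' weights $\sqrt{m/(m+1)}$ and $1/\sqrt m$, which interact with the polynomials $p_{m-n}$ as $n\to\infty$ along the branch that runs diagonally through infinitely many distinct generations. One must track $\lambdab^{(n)}(u)$ for $u\in A(v_m,n)=\{(n,m-n)\}$ carefully — the index of the relevant polynomial slides with $n$ — and show the uniform-in-$m$ bounds \eqref{bdd-wt-shift}--\eqref{inf-posi} are exactly what make the dual series converge while the original diverges; keeping the boundary/spine contributions from spoiling the uniform $O(1/n^2)$ estimate is the part that requires care. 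Everything else (Steps 1 and 2) is a routine finite-difference computation once the diagonalization via \eqref{known-rmk-new-new-new} is in place.
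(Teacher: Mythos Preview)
Your approach is the paper's: boundedness from a uniform bound on the weight ratios $p_m(n)/p_m(n-1)$, norm-increasing by direct inspection of $\|S_{\lambdab}e_v\|$, analyticity via $\alpha_{\lambdab}((0,m))=\infty$ and Corollary~\ref{dicho-coro}, and failure of Wold-type decomposition via $\alpha_{\lambdab'}((0,0))<\infty$ and Theorem~\ref{Wold-type-thm}. Two slips to fix: Step~2's digression into $3$-concavity/$3$-expansion is not part of this proposition (that is Example~\ref{Exam-ce}), and ``$3$-concave $\Leftrightarrow$ moment sequence is a polynomial of degree $\le 2$'' is false---that characterizes $3$-\emph{isometry}; in Step~3 the single vertex of $A((0,m),n)$ is $(n,m+n)$ (since $\parentn{n}{(0,m)}=(0,m+n)$), and the telescoping product $\prod_{j=1}^n\lambda_{(j,m')}^2$ never produces a ``$p_{m'}(-1)$'' because the boundary weight $\lambda_{(1,m')}^2=1/m'$ (for $m'\ge 1$) is defined separately from the ratio formula, so the product equals $p_{m'}(n-1)/m'$, and the analogous computation for $\lambdab'$ gives $\alpha_{\lambdab'}((0,0))\asymp\sum_{n\ge 1} 1/p_{n}(n-1)<\infty$ by \eqref{inf-posi}, exactly as you conclude.
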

\begin{proof}
We first check that $S_{\lambdab} \in \mathcal B(\ell^2(\mathbb N \times \mathbb Z)).$ 
To see this, by \cite[Proposition~3.1.8]{JJS2012}, it suffices to check that
\beqn
\sup_{(n, m) \in \mathbb N \times \mathbb Z} \sum_{(k, l) \in \child{(n, m)}} \lambda^2_{(k, l)} < \infty.
\eeqn
In turn, in view of \eqref{edge-set} and \eqref{wt-system}, it is enough to show that 
\beq
\label{bdd-poly-reci}
\sup_{m \in \mathbb Z, \, n \Ge 2}\frac{p_m(n-1)}{p_m(n-2)} < \infty.
\eeq
It is easy to see that $p_m(n-1) \Le 4 p_m(n-2)$ for every $m \in \mathbb Z$ and $n \Ge  3.$ Also, by \eqref{constant-1} and \eqref{bdd-wt-shift},
$\{p_m(1)\}_{m \in \mathbb Z}$ is bounded. This together with \eqref{constant-1} yields 
\eqref{bdd-poly-reci}, completing the verification of $S_{\lambdab} \in \mathcal B(\ell^2(\mathbb N \times \mathbb Z)).$ 
 
It is easy to see using \eqref{constant-1} and \eqref{wt-system} that
\allowdisplaybreaks
\beq 
\label{eq1}
\|S_{\lambdab} e_{(0,m)}\|^2 &=& 
\begin{cases} 1 & \mbox{if}~m \Ge 2, \\
2 & \mbox{if}~m \Le 1.
\end{cases}
\eeq 
A routine calculation shows that 
\beq 
\label{norm-above-0}
&& \|S_{\lambdab}^k e_{(n,m)}\|^2 = \frac{p_m(n+k-1)}{p_m(n-1)}, \quad n \geqslant 1, m \in \mathbb Z.
\eeq
Since $\|S_{\lambdab} e_{(n,m)}\|^2 = \frac{p_m(n)}{p_m(n-1)},$ $n \geqslant 1,$ $m \in \mathbb Z,$ it is now clear from \eqref{orthogonality} that $S_{\lambdab}$ is norm-increasing. 

Next, we check that $S_{\lambdab}$ is analytic. 
By \eqref{wt-system}, $\lambda_{(0, m)} \Le 1$ for any $m \in \mathbb Z.$  Another application of \eqref{wt-system} shows that 
\allowdisplaybreaks
\beqn
\alpha_{\lambdab}\big((0,m)\big) = \sum_{n = 0}^{\infty} \Big(\prod_{j=1}^n \frac{\lambda_{(j,m+n)}}{ \lambda_{(0,m+j-1)}}\Big)^2  \Ge  \begin{cases} \displaystyle \sum_{n = 1}^{\infty} \frac{p_{m+n}(n-1)}{m+n} 
& \mathrm{if}~ m \Ge 1, \\
\displaystyle \sum_{n = -m+1}^{\infty}  \frac{p_{m+n}(n-1)}{m+n} & \mathrm{if}~m \Le 0.
\end{cases}
\eeqn 
This, together with the fact that $p_m(n) \Ge 1$ for every $n \in \mathbb N$ and $m \in \mathbb Z$ (see \eqref{constant-1}), shows that both the series on the right-hand side above are divergent. 
Hence, by the virtue of Corollary~\ref{dicho-coro}, $S_{\lambdab}$ is analytic.

Now we show that $\alpha_{\lambdab'}\big((0,0)\big) < \infty$. Note that
\beqn 
\lambda'_{(n,m)} &=& \begin{cases} \frac{\lambda_{(0,m)}}{\lambda^2_{(0,m)} + \lambda^2_{(1, m+1)}} = \sqrt{\frac{m}{m+1}} & \mbox{if} ~n=0, m \Ge 1,\\
\frac{\lambda_{(1,m)}}{\lambda^2_{(0,m-1)} + \lambda^2_{(1, m)}} = \frac{1}{\sqrt{m}} & \mbox{if} ~n=1, m \Ge 2,\\
\frac{1}{\lambda_{(n,m)}} = \sqrt{\frac{p_m(n-2)}{p_m(n-1)}}& \mbox{if} ~n \Ge 2, m \Ge 1.
\end{cases}
\eeqn
It is now easy to see that 
\beqn
\alpha_{\lambdab'}\big((0,0)\big) &=& \sum_{n = 1}^{\infty} \Big(\prod_{j=1}^n \frac{\lambda'_{(j,n+1)}}{ \lambda'_{(0,j)}}\Big)^2 
= \sum_{n = 1}^{\infty} \frac{1}{p_{n+1}(n-1)}.
\eeqn 
Since $p_m(0)=1,$ $c = \inf_{m \in \mathbb Z} p_m''(0) > 0$  and $p'_m(0) > 0$,   
\beqn
\alpha_{\lambdab'}\big((0,0)\big) 
\Le \sum_{n = 1}^{\infty} \frac{2}{2+ c (n-1)^2} < \infty.
\eeqn
Hence, by Theorem~\ref{Wold-type-thm}, $S_{\lambdab}$ does not have Wold-type decomposition. 
\end{proof}

If we consider $S_{\lambdab}$ as in Proposition~\ref{prop-exam}, then $S_{\lambdab}$ is never $3$-concave. However, we are able to exhibit an analytic norm-increasing $3$-expansive $S_{\lambdab}$ without Wold-type decomposition.
\begin{example} \label{Exam-ce}
Let $\{p_m\}_{m \in \mathbb Z}$ be a bi-infinite sequence of quadratic polynomials given by $p_m(n) = 1 + a_m n + b_m n^2$, $m \in \mathbb Z$ and $n \in \mathbb N$ with positive coefficients $a_m$ and $b_m.$ Assume that $\sup_{m \in \mathbb Z}a_m < \infty,$ $\sup_{m \in \mathbb Z}b_m < \infty$ and $\inf_{m \in \mathbb Z}b_m > 0.$ Note that   \eqref{constant-1}-\eqref{inf-posi} hold. 
In addition, assume that 
\beq\label{basic-c1}
\quad a_m - a_{m-1} + \delta_{2}(m) \Le b_m + b_{m-1} + \delta_{3}(m), \quad m \in \mathbb Z,
\eeq
where $\delta_n$ denote the Dirac delta function with a point mass at $n \in \mathbb Z.$ 
We check that $S_{\lambdab}$ is a $3$-expansion. 
By \eqref{orthogonality} and the fact that $S_{\lambdab}$ is a bounded linear operator on $\ell^2(\mathbb N \times \mathbb Z),$  
$S_{\lambdab}$ is a $3$-expansion if and only if for every $(n,m) \in \mathbb N \times \mathbb Z,$
\beq \label{3-iso-equivalent}
1 - 3\|S_{\lambdab} e_{(n,m)}\|^2 + 3 \|S_{\lambdab}^2 e_{(n,m)}\|^2 - \|S_{\lambdab}^3 e_{(n,m)}\|^2 \Le 0, \quad 
\eeq
By \eqref{norm-above-0}, $\|S_{\lambdab}^k e_{(n,m)}\|^2$ is a quadratic polynomial in $k$. 
Hence,  
\eqref{3-iso-equivalent} holds (with equality) for every integer $n \geqslant 1$ and $m \in \mathbb Z$ (see, for instance,  \cite[Theorem~8.4]{D1967}). 
To complete the verification of \eqref{3-iso-equivalent}, 
fix $m \in \mathbb Z$ and note that 
by \eqref{edge-set} and \eqref{known-rmk-new}, 
\allowdisplaybreaks
\beqn
S^j_{\lambdab} e_{(0,m)} = 
\lambda_{(0, m-1)}S^{j-1}_{\lambdab} e_{(0,m-1)} +  \prod_{k=1}^j \lambda_{(k, m)} e_{(j, m)}, \quad j=1, 2, 3. 
\eeqn
This implies that the vectors $S^{j-1}_{\lambdab} e_{(0,m-1)}$ and $e_{(j, m)}$ are orthogonal, and hence for any $j=1, 2, 3,$ we obtain 
\allowdisplaybreaks
\beqn
&& \|S^j_{\lambdab} e_{(0,m)}\|^2 = 
\lambda^2_{(0, m-1)}\|S^{j-1}_{\lambdab} e_{(0,m-1)}\|^2 +  \prod_{k=1}^j \lambda^2_{(k, m)}, \\
&\overset{\eqref{constant-1}\&\eqref{wt-system}}=& \begin{cases} \frac{1}{m}\big((m-1)\|S^{j-1}_{\lambdab} e_{(0,m-1)}\|^2 + p_m(j-1)\big)  & \mbox{if}~m \Ge 2, \\
\|S^{j-1}_{\lambdab} e_{(0,m-1)}\|^2 + p_m(j-1), & \mbox{if}~m \Le 1. 
\end{cases}   
\eeqn 
This together with \eqref{constant-1} and \eqref{wt-system} yields 
\allowdisplaybreaks
\beq 
\label{eq2}
\|S^2_{\lambdab} e_{(0,m)}\|^2 
&\overset{\eqref{eq1}}=& \!\!\! \begin{cases} \frac{1}{m}\big(m-1 + p_m(1)\big) & \mbox{if}~m \Ge 3, \\
\frac{1}{2}\big(2 + p_2(1)\big) & \mbox{if}~m = 2, \\
2 + p_m(1) & \mbox{if}~m \Le 1, 
\end{cases}
\\ 
\label{eq3}
\|S^3_{\lambdab} e_{(0,m)}\|^2 
&\overset{\eqref{eq2}}=& \!\!\! \begin{cases} \frac{1}{m} \big(m-2 + p_{m-1}(1) + p_{m}(2)\big) & \mbox{if}~m \Ge 4, \\
\frac{1}{3} \big(2 + p_{2}(1) + p_{3}(2)\big) & \mbox{if}~m = 3, \\
\frac{1}{2}\big(2 + p_{1}(1) + p_{2}(2)\big) 
& \mbox{if}~m = 2, \\
2 + p_{m-1}(1) + p_{m}(2) & \mbox{if}~m \Le 1.
\end{cases}
\eeq   
Thus \eqref{eq2} and \eqref{eq3} shows that 
\beqn && 1 - 3\|S_{\lambdab} e_{(0,m)}\|^2 + 3 \|S_{\lambdab}^2 e_{(0,m)}\|^2 - \|S_{\lambdab}^3 e_{(0,m)}\|^2 \\
&=& \begin{cases} 
\frac{1}{m}(a_{m} - a_{m-1} - b_m - b_{m-1}) & \mbox{if~}m \Ge 4, \\
\frac{1}{3}(a_3-a_2-b_3-b_2-1) & \mbox{if~}m = 3, \\
\frac{1}{2}(a_2-a_1+1-b_2-b_1) & \mbox{if~}m = 2, \\
a_m-a_{m-1}-b_m-b_{m-1} & \mbox{if~}m \Le 1.
\end{cases}
\eeqn
Hence, by \eqref{basic-c1}, $S_{\lambdab}$ is a $3$-expansion. Finally, note that $p_m(n) = 1 + n + n^2$, $m \in \mathbb Z$ and $n \in \mathbb N$, satisfies \eqref{constant-1}-\eqref{inf-posi} and \eqref{basic-c1}. Consequently, with this choice of $p,$ the weights $\lambdab$ given by \eqref{wt-system} gives an analytic norm-increasing $3$-expansion $S_{\lambdab}$, which does not have Wold-type decomposition. 
\eof
\end{example}

\section*{Appendix: Invariant subspaces and Weyl sequences}
Theorem~\ref{dichotomy-new-coro} turns out to be a special case of the following general fact$:$
\begin{theorem} \label{dichotomy-new}  
Let $T \in \mathcal B(\mathcal H)$ be an analytic norm-increasing operator with spectrum contained in the closed unit disc in $\mathbb C.$ 
If $\mathcal M$ is a nonzero invariant subspace of $T',$ then for any $\lambda \in \sigma_{ap}(T'|_{\mathcal M}),$ there exists a Weyl sequence in $\mathcal M$ for both $(\lambda, T)$ and $(\lambda, T').$  
\end{theorem}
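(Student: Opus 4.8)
The plan is to work with a fixed approximate-eigenvalue $\lambda \in \sigma_{ap}(T'|_{\mathcal M})$, pick unit vectors $x_n \in \mathcal M$ with $(T' - \lambda)x_n \to 0$, and show that (after passing to a subsequence) $\{x_n\}$ can be arranged to be weakly null and is \emph{also} a Weyl sequence for $(\lambda, T)$. The first step is the weak-null reduction: since $T$ is analytic, $\bigcap_{n} T^n \mathcal H = \{0\}$, and I would argue that the Cauchy dual $T'$ of an analytic norm-increasing operator cannot have any nonzero eigenvector (equivalently $\ker(T'-\lambda)=\{0\}$ for all $\lambda$); more generally no subsequence of an approximate-eigenvector sequence can converge in norm to a nonzero limit. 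Given that, a standard argument (pass to a weakly convergent subsequence by boundedness; the weak limit lies in $\ker(T'-\lambda)=\{0\}$) produces a weakly null approximate-eigenvector sequence for $(\lambda, T'|_{\mathcal M})$. This handles the ``$(\lambda, T')$'' half.

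The heart of the matter is upgrading a Weyl sequence for $(\lambda, T')$ to one for $(\lambda, T)$. Here I would exploit the identity relating $T$ and $T'$: since $T' = T(T^*T)^{-1}$, one has $T^*T T' = T$, i.e. $T = T^{*\,-1}$-type relations are available only formally, so instead I would use the cleaner relation $T'^* T = I$ on $\mathcal H$ (because $T'^* = (T^*T)^{-1}T^*$ and $T'^*T = (T^*T)^{-1}T^*T = I$), together with $\Delta_{1,T} = T^*T - I \Ge 0$ (norm-increasing). From $(T'-\lambda)x_n \to 0$ I want to deduce $(T-\lambda)x_n \to 0$. Applying $T^*T$ (a bounded invertible operator, since $T$ is left-invertible and $T^*T \Ge I$) to $(T'-\lambda)x_n = T(T^*T)^{-1}x_n - \lambda x_n$ is the wrong direction; instead write $T x_n = T(T^*T)^{-1}(T^*T)x_n$ and split $(T^*T)x_n = x_n + \Delta_{1,T}x_n$, giving $Tx_n = T'x_n + T'\Delta_{1,T}x_n = \lambda x_n + o(1) + T'\Delta_{1,T}x_n$. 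So it remains to show $\Delta_{1,T}x_n \to 0$ (then $T'\Delta_{1,T}x_n \to 0$ since $T'$ is bounded). This is exactly where the hypotheses $|\lambda| \Le 1$ and $\sigma(T) \subseteq \overline{\mathbb D}$ and analyticity must enter.

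To prove $\Delta_{1,T}x_n \to 0$, I would compute $\|T'x_n\|^2 = \langle T'^*T'x_n, x_n\rangle$ and compare with $\|x_n\|^2 = 1$. Since $T'$ is the Cauchy dual of a norm-increasing operator, it is a \emph{norm-decreasing} (contractive) operator wherever $T^*T \Ge I$ forces $(T^*T)^{-1} \Le I$, hence $\|T'x_n\| \Le \|x_n\| = 1$; but $\|T'x_n\| \to |\lambda|\,\|x_n\| = |\lambda| \Le 1$ as well only gives an inequality, not the telescoping I want. The right tool is the power-bounded / spectral-radius input: because $\sigma(T') = \sigma(T)^{-1}$ up to the usual care with $0$, and $\sigma(T) \subseteq \overline{\mathbb D}$, one gets control forcing that the ``defect'' $\langle \Delta_{1,T}x_n, x_n\rangle = \|Tx_n\|^2 - \|x_n\|^2$ tends to $0$; concretely I would iterate the relation $\|T^k x_n\|$ is comparable to $\|x_n\|$ using that $T$ analytic + norm-increasing + $\sigma(T)\subseteq\overline{\mathbb D}$ makes $\{\|T^k x_n\|^2\}_k$ a bounded nondecreasing sequence whose increments are the quantities $\langle \Delta_{1,T} T^k x_n, T^k x_n\rangle \Ge 0$, and the near-$T'$-eigenvector property pins these increments down. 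I expect \textbf{this step — extracting $\Delta_{1,T}x_n \to 0$ from the spectral hypothesis — to be the main obstacle}; it is presumably where the appendix's real content lies, and likely uses an averaging/telescoping argument over the iterates $T'^k x_n$ combined with the contractivity of $T'$ and $\|T'^k\| $ bounded below via $\sigma(T') \subseteq \{|z|\Ge 1\}\cup\{0\}$. Once $\Delta_{1,T}x_n\to 0$ is in hand, the two conclusions follow: $(T-\lambda)x_n \to 0$ and $(T'-\lambda)x_n\to 0$ with $\{x_n\}$ weakly null in $\mathcal M$, which is the assertion. The reduction of Theorem~\ref{dichotomy-new-coro} from Theorem~\ref{dichotomy-new} should then be immediate: an analytic norm-increasing $m$-concave operator has spectrum in $\overline{\mathbb D}$, and $\mathcal M = \mathcal H \ominus [\ker T^*]_T$ is invariant under $T'$ by Shimorin's theory, with infinite-dimensionality following from the absence of eigenvectors for $T'|_{\mathcal M}$ together with $\sigma_{ap}(T'|_{\mathcal M}) \neq \emptyset$.
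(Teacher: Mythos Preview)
Your overall strategy matches the paper's: pick unit vectors $x_n\in\mathcal M$ with $(T'-\lambda)x_n\to 0$, upgrade to $(T-\lambda)x_n\to 0$, and extract a weakly null subsequence. Your reduction
\[
(T-\lambda)x_n \;=\; (T'-\lambda)x_n \;+\; T'\,(T^*T-I)x_n
\]
is exactly the decomposition the paper uses (in the equivalent form $T=T'(T'^*T')^{-1}$), and you correctly isolate $(T^*T-I)x_n\to 0$ as the crux.

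The genuine gap is that you do not prove this crux, and your speculated route (telescoping over iterates $T'^k x_n$, spectral information on $T'$) is not what works. The paper's argument is short and different. First, from $T^*T'=I$ one gets $\lambda T^*x_n - x_n\to 0$, so $1/\lambda\in\sigma_{ap}(T^*)\subseteq\overline{\sigma(T)}$; the hypothesis $\sigma(T)\subseteq\overline{\mathbb D}$ forces $|\lambda|\Ge 1$, while contractivity of $T'$ gives $|\lambda|\Le 1$, hence $|\lambda|=1$. Second, the paper invokes an ``approximate Sz.-Nagy--Foias'' lemma: if $S$ is a contraction and $\{h_n\}$ are unit vectors with $Sh_n-h_n\to 0$, then $S^*h_n-h_n\to 0$ (proof: $\|S^*h_n-h_n\|^2\Le 2-2\Re\langle h_n,Sh_n\rangle=-2\Re\langle h_n,Sh_n-h_n\rangle\to 0$). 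Applied to $S=\bar\lambda T'$, this yields $T'^*x_n-\bar\lambda x_n\to 0$, hence $T'^*T'x_n-x_n\to 0$. Since $T'^*T'=(T^*T)^{-1}$, multiplying by $T^*T$ gives $(T^*T-I)x_n\to 0$, which closes your gap.

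One further correction: your weak-null step is run in the wrong order. You claim $\ker(T'-\lambda)=\{0\}$ directly from analyticity of $T$, but this is not immediate. The paper instead first establishes $(T-\lambda)x_n\to 0$ and then uses that an analytic operator has no nonzero eigenvalue to extract a weakly null subsequence for $(\lambda,T)$; this subsequence is automatically Weyl for $(\lambda,T')$ as well.
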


Recall the fact that the fixed points of a contraction and its adjoint are the same (see \cite[Proposition 3.1]{SF}). Here is an ``approximate'' analog of this result.
\begin{lemma}\label{app-NF}
Let $S\in \mathcal B(\mathcal H)$ be a contraction. Assume that there exists a sequence $\{h_n\}_{n \Ge 1}$ of unit vectors in $\mathcal H$ such that $Sh_n- h_n \rar 0$ as $n \rar \infty.$ Then $S^*h_n- h_n \rar 0$ as $n \rar \infty.$
\end{lemma}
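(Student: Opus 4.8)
The plan is to exploit the standard computation that makes the fixed-point theorem work for contractions, and show it degrades gracefully to an ``approximate'' statement. For a contraction $S$ and a unit vector $h$, expand
\[
\|S^*h - h\|^2 = \|S^*h\|^2 - 2\,\mathrm{Re}\,\inp{S^*h}{h} + \|h\|^2 = \|S^*h\|^2 - 2\,\mathrm{Re}\,\inp{h}{Sh} + 1.
\]
Since $\|S^*h\| \Le \|h\| = 1$, the first term is at most $1$, so $\|S^*h - h\|^2 \Le 2 - 2\,\mathrm{Re}\,\inp{h}{Sh}$. Now apply this with $h = h_n$: because $Sh_n - h_n \rar 0$, we have $\inp{h_n}{Sh_n} \rar \inp{h_n}{h_n} = 1$ (the difference $\inp{h_n}{Sh_n} - 1 = \inp{h_n}{Sh_n - h_n}$ is bounded in modulus by $\|Sh_n - h_n\| \rar 0$), hence $\mathrm{Re}\,\inp{h_n}{Sh_n} \rar 1$, and therefore $\|S^*h_n - h_n\|^2 \rar 0$.

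The first step I would carry out is the algebraic expansion above; the second is the observation that $\|S^*h\| \Le 1$ (contractivity of $S^*$, equivalently of $S$); the third is passing to the limit along $\{h_n\}$. None of these steps is a genuine obstacle — the lemma is elementary — but the one point requiring a little care is making sure the inequality $\|S^*h\|^2 \Le 1$ is used in the right direction so that the cross term, which is the only term carrying information, controls everything. It is worth noting that no assumption beyond contractivity is needed: we never use injectivity, analyticity, or any spectral hypothesis, and the vectors $h_n$ need not be weakly null for this particular lemma (the weak-nullity will matter only later, when the lemma is fed into the proof of Theorem~\ref{dichotomy-new}).

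One could alternatively phrase the argument via the defect operator: $\|Sh\|^2 + \|(I - S^*S)^{1/2}h\|^2 = \|h\|^2$ shows $\|(I-S^*S)^{1/2}h_n\| \rar 0$, and then
\[
\|S^*h_n - h_n\| \Le \|S^*(h_n - Sh_n)\| + \|S^*Sh_n - h_n\| \Le \|h_n - Sh_n\| + \|(I - S^*S)h_n\|,
\]
with $\|(I-S^*S)h_n\| \Le \|(I-S^*S)^{1/2}\|\,\|(I-S^*S)^{1/2}h_n\| \rar 0$. Either route is short; I would present the first since it is the most self-contained and mirrors the contraction fixed-point proof most transparently.
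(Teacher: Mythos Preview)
Your primary argument is correct and essentially identical to the paper's own proof: both expand $\|S^*h_n-h_n\|^2$, bound $\|S^*h_n\|^2\Le 1$ via contractivity, and control the cross term $\Re\inp{h_n}{Sh_n}$ using $|\inp{h_n}{Sh_n-h_n}|\Le\|Sh_n-h_n\|\rar 0$. The defect-operator alternative you sketch is also valid but is not the route the paper takes.
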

\begin{proof}
Since $S^*$ is a contraction and $\|h_n\|=1,$ 
\beqn
\|S^*h_n- h_n\|^2 &=& \|S^*h_n\|^2 + \|h_n\|^2 - 2 \Re(\inp{S^*h_n}{h_n}) \\
& \Le & 2(1- \Re(\inp{h_n}{Sh_n}) \\
&=& -2\Re(\inp{h_n}{Sh_n-h_n}.
\eeqn
However, $|\inp{h_n}{Sh_n-h_n}| \leq \|Sh_n- h_n\| \rar 0,$  and hence 
$\|S^*h_n- h_n\| \rar 0$ as $n \rar \infty.$
This completes the proof. 
\end{proof}

All nontrivial approximate eigenvalues yield Weyl sequences. 
\begin{lemma}\label{lemma-ap-p}
Let $S\in \mathcal B(\mathcal H)$ and let $\lambda \in \mathbb C \backslash \sigma_{p}(S)$ be nonzero. Assume that there exists a sequence $\{h_n\}_{n \Ge 1}$ of unit vectors in $\mathcal H$ such that  
\beq \label{S-app-0}
\mbox{$(S-\lambda)h_n \rar 0$,}
\eeq
Then there exists a weakly null subsequence $\{h_{n_k}\}_{n \Ge 1}$ of $\{h_n\}_{n \Ge 1}.$ 
\end{lemma}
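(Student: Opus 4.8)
The statement to prove is Lemma~\ref{lemma-ap-p}: if $S \in \mathcal B(\mathcal H)$, $\lambda \in \mathbb C \setminus \sigma_p(S)$ is nonzero, and $\{h_n\}_{n \Ge 1}$ is a sequence of unit vectors with $(S-\lambda)h_n \rar 0$, then $\{h_n\}$ has a weakly null subsequence. The plan is as follows. Since $\{h_n\}$ is a bounded sequence in a Hilbert space, the unit ball is weakly sequentially compact, so there is a subsequence $\{h_{n_k}\}$ converging weakly to some $h \in \mathcal H$; it suffices to show $h = 0$. The natural idea is to show that $h$ is an eigenvector of $S$ for the eigenvalue $\lambda$, which by the hypothesis $\lambda \notin \sigma_p(S)$ forces $h = 0$.

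First I would record that $\| (S-\lambda) h_{n_k} \| \rar 0$ by \eqref{S-app-0}. Next, I would use the fact that a bounded operator is weak-to-weak continuous: since $h_{n_k} \rightharpoonup h$, also $(S-\lambda) h_{n_k} \rightharpoonup (S-\lambda) h$. But a sequence that converges in norm to $0$ converges weakly to $0$, and weak limits are unique, so $(S-\lambda)h = 0$, i.e. $Sh = \lambda h$. Since $\lambda \notin \sigma_p(S)$, the operator $S - \lambda$ is injective, hence $h = 0$. Therefore $\{h_{n_k}\}$ is weakly null, which is exactly the assertion. (The hypothesis that $\lambda$ is nonzero does not seem to be needed for this particular conclusion; it is presumably there for later use, or to keep the lemma's hypotheses uniform with the surrounding development — I would not worry about it in the proof itself, though one could remark on it.)

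I do not expect any serious obstacle here: the argument is a routine application of weak sequential compactness of bounded sets in Hilbert space together with weak-to-weak continuity of bounded operators. The only point requiring minimal care is to spell out why $(S-\lambda)h_{n_k} \rightharpoonup (S-\lambda)h$: for every $g \in \mathcal H$, $\langle (S-\lambda)h_{n_k}, g\rangle = \langle h_{n_k}, (S-\lambda)^* g\rangle \rar \langle h, (S-\lambda)^* g\rangle = \langle (S-\lambda)h, g\rangle$. Combined with $\|(S-\lambda)h_{n_k}\| \rar 0$, this gives $(S-\lambda)h = 0$, and injectivity of $S-\lambda$ finishes the proof.
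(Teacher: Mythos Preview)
Your proposal is correct and follows essentially the same approach as the paper: extract a weakly convergent subsequence by weak sequential compactness, pass to the weak limit through the bounded operator $S-\lambda$, and conclude the limit is zero since $\lambda\notin\sigma_p(S)$. Your observation that the hypothesis $\lambda\neq 0$ is not used in the argument is also accurate; the paper's proof does not invoke it either.
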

\begin{proof}  
Since $\{h_n\}_{n \Ge 1}$ is a bounded sequence in a Hilbert space, by Banach's theorem, there exists a subsequence $\{h_{n_k}\}_{k \Ge 1}$ of $\{h_n\}_{n \Ge 1}$ weakly convergent to, say, $g_0 \in \mathcal H.$ By \eqref{S-app-0} or otherwise, $\{Sh_{n_k}\}_{k \Ge 1}$ converges weakly to, $Sg_0.$ By the uniqueness of the weak limit and \eqref{S-app-0}, $Sg_0 = \lambda g_0.$ Since $\lambda \notin \sigma_p(S),$ $g_0=0.$ This shows that  $\{h_{n_k}\}_{k \Ge 1}$ is weakly null.  
\end{proof}

\begin{proof}[Proof of Theorem~\ref{dichotomy-new}]  
Since $T'$ is a left-invertible, so is $T'|_{\mathcal M}.$ Let $\lambda \in \sigma_{ap}(T'|_{\mathcal M}).$ Then $\lambda \neq 0$ and 
there exists a sequence $\{h_n\}_{n \Ge 1}$ of unit vectors in $\mathcal M$ such that 
\beq \label{T-prime-a}
\mbox{$T' h_n-\lambda h_n \rar 0$ as $n \rar \infty.$}
\eeq
However, $T'$ is a contraction (since $T$ is an expansion), and hence $|\lambda| \leqslant 1.$ As $T^*T'=I$, we must have $\lambda T^*h_n - h_n \rar 0$ as $n \rar \infty.$
Since the spectrum of $T$ is contained in the closed unit disc, $|\lambda|=1$. 
In view of \eqref{T-prime-a}, applying Lemma~\ref{app-NF} to the contractive operator $\overline{\lambda}T'$ shows that $T'^*h_n-\overline{\lambda}h_n \rar 0$ as $n \rar \infty.$ It follows that  
$T'^* T' h_n - h_n \rar 0$ as $n \rar \infty.$
Equivalently, 
$(T'^* T')^{-1} h_n - h_n \rar 0$ as $n \rar \infty.$ Since $T=(T')',$ it now follows that
\beqn
(T-\lambda)h_n &=& (T'(T'^* T')^{-1}- \lambda) h_n \\
&=& T'((T'^* T')^{-1}- I)h_n + (T'-\lambda) h_n,
\eeqn
which, by \eqref{T-prime-a}, converges to $0$ as $n \rar \infty$. Thus $\lambda \in \sigma_{ap}(T)\setminus \{0\}.$
Since an analytic operator does not have a nonzero eigenvalue, by Lemma~\ref{lemma-ap-p}, there exists a subsequence $\{h_{n_k}\}_{n \Ge 1}$ of $\{h_n\}_{n \Ge 1},$ which is a Weyl sequence for $(\lambda, T).$ It is clear from \eqref{T-prime-a} that $\{h_{n_k}\}_{k \Ge 1}$ is also a  Weyl sequence for $(\lambda, T').$ This completes the proof. 
\end{proof}

\begin{proof}[Proof of Theorem~\ref{dichotomy-new-coro}]
Since the spectrum of a norm-increasing $m$-concave operator is contained in the closed unit disc, one may apply Theorem~\ref{dichotomy-new} to $\mathcal M:=\mathcal H \ominus [\ker T^*]_T.$
\end{proof}
Let $T \in \mathcal B(\mathcal H)$ be a norm-increasing $3$-isometry. If $T$ does not have the wandering subspace property, then $T^*T-I$ has infinite rank. This follows from \cite[Corollary 8.6]{LGR2021} and the fact that an analytic $2$-isometry has the wandering subspace property (see \cite{R1988}). 


\end{document}